\DeclareMathOperator{\Hom}{Hom}
\DeclareMathOperator{\End}{End}
\DeclareMathOperator{\Orb}{Orb}
\DeclareMathOperator{\im}{Im}
\DeclareMathOperator{\Ker}{Ker}
\DeclareMathOperator{\Mod}{Mod}
\DeclareMathOperator{\Rep}{Rep}
\DeclareMathOperator{\finrep}{fin}
\DeclareMathOperator{\Vect}{Vect}
\DeclareMathOperator{\fin}{fin}
\DeclareMathOperator{\Aut}{Aut}
\DeclareMathOperator{\ev}{ev}
\DeclareMathOperator{\ad}{ad}
\newcommand{\mc}[1]{\mathcal{#1}}
\newcommand{\mb}[1]{\mathbb{#1}}
\newcommand{\mf}[1]{\mathfrak{#1}}
\newcommand{\rs}{\mb{X}}
\newcommand{\poles}{S}
\newcommand{\srs}{\mb{Y}}
\newcommand{\mbX}{\rs}
\newcommand{\id}{\mathrm{id}}
\newcommand{\Rsphere}{\overline{\mathbb{C}}}
\newcommand{\OX}{\mathcal{O}_\mbX}
\newcommand{\sltwo}{\mathfrak{sl}_2(\mathbb{C})}
\newcommand{\otimesC}{\otimes_{\mathbb{C}}}
\newcommand{\generalALA}{\mathfrak{A}(\mathfrak{g},\mbX,\Gamma,\sigma_{\mathfrak{g}},\sigma_\mbX)}
\newcommand{\lieg}{\mathfrak{g}}
\newcommand{\mfA}{\mathfrak{A}}
\newcommand{\rg}{\Gamma}
\newcommand{\basis}{\mathfrak{B}_\lieg}
\newcommand{\genrs}{x}
\newcommand{\genrstwo}{y}
\newcommand{\polyvar}{z}
\newcommand{\jet}[2]{J_{#1,#2}}
\newcommand{\vjet}[2]{\id\otimes J_{#1,#2}}
\newcommand{\jordan}{J}
\newcommand{\indrep}[2]{\varphi_{#1,#2}}
\newcommand{\jetrep}[2]{\phi_{#1,#2}}
\newcommand{\ideal}{\mathfrak{I}}
\newcommand{\strictpos}{\mathbb{Z}_{> 0}}
\theoremstyle{plain}
\newtheorem{thm}{Theorem}[section]
\newtheorem*{thm*}{Theorem}
\newtheorem{lem}[thm]{Lemma}
\newtheorem{cor}[thm]{Corollary}
\newtheorem*{cor*}{Corollary}
\newtheorem{prop}[thm]{Proposition}
\newtheorem*{prop*}{Proposition}
\theoremstyle{definition}
\newtheorem{defn}[thm]{Definition}
\newtheorem{exam}[thm]{Example}
\newtheorem*{exam*}{Example}
\theoremstyle{remark}
\newtheorem{rem}[thm]{Remark}
\numberwithin{equation}{section}
\begin{document}
		\title[{Wild Local Structures of ALiAs}]{Wild Local Structures of \\Automorphic Lie Algebras}

\author{Drew Duffield}
\address{Department of Mathematical Sciences,
Durham University,
Lower Mountjoy,
Stockton Road,
Durham DH1 3LE,
UK}
\email{drew.d.duffield@durham.ac.uk }

\author{Vincent Knibbeler}\address{Department of Mathematical Sciences,
Loughborough University, Loughborough LE11 3TU, UK}
\email{V.Knibbeler@lboro.ac.uk}

\author{Sara Lombardo}\address{Department of Mathematical Sciences,
Loughborough University, Loughborough LE11 3TU, UK}
\email{S.Lombardo@lboro.ac.uk}

	\maketitle
	\begin{abstract}
We study automorphic Lie algebras using a family of evaluation maps parametrised by the representations of the associative algebra of functions. This provides a descending chain of ideals for the automorphic Lie algebra which is used to prove that it is of wild representation type. 
We show that the associated quotients of the automorphic Lie algebra are isomorphic to twisted truncated polynomial current algebras. When a simple Lie algebra is used in the construction, this allows us to describe the local Lie structure of the automorphic Lie algebra in terms of affine Kac-Moody algebras.
	\end{abstract}

\vspace{1cm}
\noindent\textit{Keywords}:
representation type, automorphic Lie algebras, equivariant map algebras, truncated current algebras, compact Riemann surfaces

\section{Introduction}
Let $\srs$ be a Riemann surface and $\lieg$ a complex finite dimensional Lie algebra, and suppose $\rg$ is a group 
acting effectively on both $\srs$ and $\lieg$ by automorphisms.
The space of meromorphic maps from $\srs$ to $\lieg$ becomes a Lie algebra when we define the bracket of two such maps as the pointwise bracket in $\lieg$. The Lie subalgebra of $\rg$-equivariant maps which are holomorphic on the complement $\rs$ of a $\rg$-invariant subset $\poles$ of $\srs$ is an automorphic Lie algebra. In this paper $\srs$ will be compact and $\rg$ finite. We will moreover assume $\poles$ to be finite and call $\rs$ a punctured compact Riemann surface. The best known examples are the genus zero cases with one and two punctures and with cyclic reduction group: the twisted current algebras\footnote{Some authors denote by current algebras a far greater set of Lie algebras. In this paper a current algebra will always be of the form $\mf{g}\otimes_{\mb{C}}\mb{C}[\polyvar]$.} $(\mf{g}\otimes_{\mb{C}}\mb{C}[\polyvar])^\rg$ and twisted loop algebras $(\mf{g}\otimes_{\mb{C}}\mb{C}[\polyvar,\polyvar^{-1}])^\rg$ respectively.
Defined originally in the context of integrable systems \cite{lombardo,lombardo2004reductions,lombardo2005reduction} where $\rg$ (more precisely its representation by automorphisms of the Lie algebra) is called a \emph{reduction group} \cite{Mikhailov81,lombardo2004reductions,lombardo2005reduction}, automorphic Lie algebras are also known as equivariant map algebras, following a completely independent development, see, e.g. \cite{neher2012irreducible}, these latter being defined in a more general context. Note however that in the original papers on automorphic Lie algebras, $\srs$ was taken to be the Riemann sphere, and thus this paper works with a more general notion of automorphic Lie algebras than those in \cite{lombardo,lombardo2004reductions,lombardo2005reduction}.

The aim of this paper is to lay down foundations for a representation theory of automorphic Lie algebras.
The starting point in representation theory of finite-dimensional algebras is to understand the representation type, namely whether the object one is considering is of \emph{tame} or \emph{wild} representation type. In the former case, indecomposable modules in each dimension occur in a finite number of one-parameter families, while the latter case is much more difficult, at least as complicated as the representation theory of all finite-dimensional algebras. Automorphic Lie algebras are infinite-dimensional, and there the tame-wild dichotomy is less understood (see \cite{iovanov2018} for an account), nonetheless answering the \emph{type question} is of fundamental importance. 

We reach the conclusion that automorphic Lie algebras are of wild representation type by studying the Lie algebra in the vicinity of a point of the Riemann surface and showing that this local Lie algebra is of wild representation type. This approach leads to a second result of this paper: a complete local structure theory of automorphic Lie algebras. In contrast, the previous literature on automorphic Lie algebras has been almost exclusively concerned with global structure theory, with a computational programme on one hand, most recently \cite{knibbeler2017higher}, and a theoretical programme on the other \cite{knibbeler2019hereditary, knibbeler2020cohomology}. Even on the Riemann sphere, the global structure of automorphic Lie algebras is not yet completely understood.

Our first aim is to investigate what role the representation theory of the ring of holomorphic functions on the Riemann surface $\rs$ has on the structure of the automorphic Lie algebra; in Section \ref{sec:ideals}, we show that the representations of the ring of holomorphic functions on the punctured Riemann surface give rise to various ideals and quotients of the automorphic Lie algebra. 
In particular, we show that certain composition series of representations in the category of representations of the space of holomorphic functions on $\rs$ gives rise to a descending chain of ideals of the automorphic Lie algebra, and thus also a series of quotients.
In Section \ref{sec:wildness}, we use this chain of ideals to prove that automorphic Lie algebras are wild. 
In Section \ref{sec:closer} we show that the quotients of the automorphic Lie algebras are isomorphic to twisted truncated current algebras, using a local coordinate on the Riemann surface that linearises the group action together with the Riemann-Roch theorem. 
Section \ref{sec:local} specialises to the case where $\mf{g}$ is a simple Lie algebra, and we use root systems of Kac-Moody algebras to formulate a local structure theory for the automorphic Lie algebras. 

The first quotient in the series essentially classifies finite-dimensional irreducible representations of automorphic Lie algebras \cite{neher2012irreducible}. We hope that the full series will help to provide the foundations for developing the representation theory of automorphic Lie algebras further, and so we end the paper with Section~\ref{sec:directions}, where we present some potential further directions of research.
	
	\section*{Notation}
	For an arbitrary (Lie or associative) algebra $\mathfrak{A}$, we denote by $\Rep \mathfrak{A}$ (resp. $\finrep \mathfrak{A}$) the category of representations (resp. finite-dimensional representations) of $\mathfrak{A}$. We will often write an object in the category $\Rep \mathfrak{A}$ as a pair $(V, \rho)$ where $V$ is a vector space and $\rho\colon \mathfrak{A} \rightarrow \End_{\mathbb{C}}(V)$ is a linear map. For the purposes of readability, we will sometimes refer to a representation $(V, \rho)$ simply as $\rho$, provided the context is clear. 
	
	We denote by $\otimes_\mathbb{C}$ the usual tensor product in the category $\Vect \mathbb{C}$ of $\mathbb{C}$-vector spaces. For finite matrices $X$ and $Y$ over $\mathbb{C}$, $X \otimes_{\mathbb{C}} Y$ is interpreted as the usual Kronecker product.
	
	Throughout, we let $\srs$ be a compact Riemann surface, We let $\lieg$ be a complex finite-dimensional Lie algebra, and we let $\rg$ be a finite group that acts faithfully on both $\lieg$ and $\srs$ by automorphisms. 
Moreover we let $\poles$ be a finite, non-empty, $\rg$-invariant subset of $\srs$ and $\rs$ the punctured compact Riemann surface $\rs=\srs\setminus \poles$.
When it is convenient to have a name for the action of $\rg$ on $\mf{g}$ and $\rs$ we will use
	\begin{align*}
		\sigma_{\lieg}&\colon \rg \times \lieg \rightarrow \lieg,\\
		\sigma_{\mbX}&\colon \rg \times \mbX \rightarrow \mbX
	\end{align*}
with $\sigma_{\lieg}(\gamma, A)=\gamma A$ and $\sigma_{\mbX}(\gamma,\genrs)=\gamma\genrs$.

We denote by $\OX$ the space of meromorphic functions on $\srs$ which are holomorphic on $\rs$ (it is also sensible and common to denote this space by $\mc{O}_\srs(\rs)$ since it depends on the compact Riemann surface $\srs$, but we opt for the shorter notation $\OX$ as there is little opportunity for confusion). 
The action of $\rg$ on $\rs$ then induces an action on $\mathcal{O}_\mbX$ by $\gamma f=f \circ \gamma^{-1}$ for any $\gamma \in \rg$ and any rational function $f \in \mathcal{O}_\mbX$.
	
 We denote by $\mathfrak{A}(\lieg, \mbX, \rg, \sigma_{\lieg}, \sigma_\mbX)$ the automorphic Lie algebra
	\begin{equation*}
		(\lieg \otimes_{\mathbb{C}} \mathcal{O}_\mbX)^\rg = \{ a \in \lieg \otimes_{\mathbb{C}} \mathcal{O}_\mbX : \gamma a=a \text{ for any } \gamma \in \rg\},
	\end{equation*}
	where the action of $\rg$ on $\lieg$ and $\mbX$ is precisely $\sigma_\lieg$ and $\sigma_\mbX$ respectively. Quite often, it is convenient to write the elements of $\generalALA$ as $\sum_{i \in I} A_i \otimes f_i$ for some $A_i \in \lieg$, some $f_i \in \OX$ and some index set $I$.

	\section{Preliminaries} \label{sec:prelim}
	
	\subsection{Point-Evaluation Representations}
	Neher, Savage and Senesi classified the irreducible finite-dimensional representations of equivariant map algebras in \cite{neher2012irreducible}. As a subclass of equivariant map algebras, this classification specialises to automorphic Lie algebras on compact Riemann surfaces.

The classification in \cite{neher2012irreducible} involves certain representations called evaluation representations. We present a brief recollection of the evaluation representations of \cite{neher2012irreducible} in this subsection, which we instead call \emph{point-evaluation representations}.
	
	\begin{defn}[\cite{neher2012irreducible}]
		Let $\mathfrak{A}=\generalALA$ be an automorphic Lie algebra and $X\subset \mbX$ be a finite subset. For each $\genrs \in X$, let $\lieg^\genrs$ denote the Lie subalgebra
		\begin{equation*}
			\lieg^\genrs=\{A \in \lieg : \gamma A = A \text{ for any } \gamma \in \rg_\genrs\} \subseteq \lieg,
		\end{equation*}
		where $\rg_\genrs$ is the stabiliser subgroup of $\rg$ with respect to $\genrs$. The \emph{point-evaluation map} of the set $X$ is the map
		\begin{equation*}
			\ev_X\colon \mathfrak{A} \rightarrow \bigoplus_{\genrs \in X} \lieg^\genrs
		\end{equation*}
		defined such that $\ev_X\left(\sum_{i} A_i \otimes f_i\right)=\left(\sum_{i} f_i(\genrs) A_i\right)_{\genrs \in X}$ for any element $\sum_{i} A_i \otimes f_i \in \mfA$.
	\end{defn}
	
	The point-evaluation map plays an essential role in the representation theory of $\mfA$. Not least because it allows us to define irreducible representations of $\mfA$ in the following way.
	
	\begin{defn}[\cite{neher2012irreducible}] \label{PointRep}
		Let $\mfA=\generalALA$ be an automorphic Lie algebra and let $X\subset \mbX$ be a finite subset such that $\genrs \not\in \Orb(\genrstwo)$ for any two distinct points $\genrs,\genrstwo \in X$. In addition, let $\rho_\genrs \colon \lieg^\genrs \rightarrow \End_{\mathbb{C}}(V_\genrs) \in \fin \lieg^\genrs$ be irreducible for each $\genrs \in X$. A \emph{point-evaluation representation} of $\mfA$ with respect to $X$ is a representation of the form
	\begin{equation*}
		\left(\bigotimes_{\genrs \in X}\rho_\genrs\right) \circ \ev_X \colon \mfA \rightarrow \End_{\mathbb{C}}\left( \bigotimes_{\genrs \in X} V_\genrs \right),
	\end{equation*}
	where the tensor product on the left-hand-side is in $\fin \left(\bigoplus_{\genrs \in X} \lieg^\genrs\right)$.
	\end{defn}
	
	\subsection{Filtrations of Representations}
	We make use of terminology in representation theory typically reserved for modules over rings and associative algebras, but which is easily adaptable to representations of Lie algebras. Let $\mathfrak{A}$ be an arbitrary (Lie or associative) algebra. Since $\finrep \mathfrak{A}$ is an abelian category, one has a \emph{composition series}
	\begin{equation*}
		(V,\rho) = (V_0,\rho_0) \supset (V_1,\rho_1) \supset \ldots \supset (V_n,\rho_n) = 0
	\end{equation*}
	for any representation $(V,\rho) \in \finrep \mathfrak{A}$, where each $(V_i,\rho_i) / (V_{i+1},\rho_{i+1})$ is irreducible. We define the \emph{length} of $(V,\rho)$, which we denote by $\ell(V,\rho)$, to be the least integer $n$ such that $(V_n,\rho_n)=0$. A composition series of a representation $(V,\rho) \in \finrep \mathfrak{A}$ is not necessarily unique, but the length of a representation is well-defined. That is, the integer $n$ depends only on $(V,\rho)$.
	
	\begin{defn}
		Let $\mathcal{A}$ be an abelian category. We say an object $\rho \in \mathcal{A}$ is \emph{uniserial} if it has a unique composition series.
	\end{defn}
	
	\subsection{Wild Representation Type}
	A fundamental goal in representation theory is to understand the category $\fin \mfA$. An important step towards a complete understanding of this category is a complete classification of the indecomposable representations. For many algebras, this is considered to be a wild problem, which means it contains the problem of classifying pairs of commuting matrices up to simultaneous similarity (a precise statement is given in the definitions below). This is considered to be extremely hard --- not least because it contains the problem of classifying the indecomposable representations of all finite-dimensional associative algebras.
	
	\begin{defn}
		Let $\mfA$ be an arbitrary (Lie or associative) algebra over an algebraically closed field $K$. An additive, full, exact subcategory $\mathcal{C} \subseteq \fin \mfA$ is said to be wild if there exists an exact $K$-linear functor $F\colon \fin K[x,y] \rightarrow \mathcal{C}$ that maps indecomposable objects to indecomposable objects and respects isomorphism classes (that is, $F(V,\rho)\cong F(V',\rho')$ implies $(V,\rho) \cong (V',\rho')$).
	\end{defn}
	As mentioned above, this definition is equivalent to the following.
	\begin{prop}[\cite{BlueBookIII}, XIX.1.11]
		An additive, full, exact subcategory $\mathcal{C} \subseteq \fin \mfA$ is wild if and only if for any finite-dimensional associative algebra $\Lambda$, there exists an exact $K$-linear functor $F\colon \fin \Lambda \rightarrow \mathcal{C}$ that maps indecomposable objects to indecomposable objects and respects isomorphism classes.
	\end{prop}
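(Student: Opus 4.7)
The plan is to prove the two implications separately, in each case relying on a classical representation-embedding theorem from the structure theory of wild algebras.

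For the direction $(\Rightarrow)$, I would start from a wildness functor $F\colon \fin K[x,y] \to \mathcal{C}$ and pre-compose with a suitable functor out of $\fin \Lambda$. The key classical input here is the \emph{strict wildness} of $K[x,y]$: for every finite-dimensional $K$-algebra $\Lambda$ there exists an exact $K$-linear functor $G_\Lambda\colon \fin \Lambda \to \fin K[x,y]$ that sends indecomposables to indecomposables and reflects isomorphisms. Granting this, the composite $F \circ G_\Lambda \colon \fin \Lambda \to \mathcal{C}$ is exact and $K$-linear because both factors are, and the preservation of indecomposability and isomorphism classes follows immediately from the corresponding properties of $F$ and $G_\Lambda$.

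For the direction $(\Leftarrow)$, the difficulty is that the hypothesis only supplies functors out of finite-dimensional algebras, while $K[x,y]$ is infinite-dimensional. I would bridge this gap by choosing a specific finite-dimensional wild algebra $\Lambda_0$, for instance the path algebra of the $3$-Kronecker quiver or any other standard example from the Drozd dichotomy. Applying the hypothesis to $\Lambda = \Lambda_0$ yields $H\colon \fin \Lambda_0 \to \mathcal{C}$ with the stated preservation properties. By the classical definition of wildness for the finite-dimensional algebra $\Lambda_0$, there is an exact $K$-linear representation embedding $E\colon \fin K[x,y] \to \fin \Lambda_0$ preserving indecomposables and reflecting isomorphisms. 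The composite $H \circ E \colon \fin K[x,y] \to \mathcal{C}$ is then the desired functor witnessing the wildness of $\mathcal{C}$ in the sense of the preceding definition.

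The real content of the proposition is therefore encapsulated in the two representation-embedding theorems used above: the strict wildness of $K[x,y]$ in $(\Rightarrow)$, and the existence of a finite-dimensional wild algebra $\Lambda_0$ equipped with a representation embedding from $\fin K[x,y]$ in $(\Leftarrow)$. Both are standard results in the theory of representation type, treated in detail in the cited source, and the proof of the equivalence itself is then just a routine composition argument. The main obstacle, and where essentially all the technical work lives, is in establishing these two embedding theorems; once they are invoked as black boxes, the proposition becomes nearly formal.
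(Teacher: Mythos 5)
The paper does not prove this proposition; it is quoted directly from the cited reference ([BlueBookIII], XIX.1.11), so there is no in-paper argument to compare against. Your sketch is nonetheless the standard proof of this equivalence: the $(\Rightarrow)$ direction uses that $K[x,y]$ is strictly wild (every $\fin\Lambda$ admits a representation embedding into $\fin K[x,y]$, via the usual chain through $K\langle x,y\rangle$ and pairs of commuting nilpotent matrices), and the $(\Leftarrow)$ direction uses the existence of a single finite-dimensional wild algebra $\Lambda_0$ admitting a representation embedding $\fin K[x,y]\to\fin\Lambda_0$, after which both directions reduce to composing exact $K$-linear functors that preserve indecomposables and reflect isomorphisms. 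This is correct and is precisely the argument one would find in the cited source; the only thing worth being careful about is that the ``representation embeddings'' you invoke must indeed be exact $K$-linear functors with the two stated preservation properties (which the standard constructions, given by tensoring with suitable bimodules, do satisfy).
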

	
	We say that an arbitrary (Lie or associative) algebra $\mfA$ is wild if $\fin \mfA$ is wild. Usually, it is easier to prove wildness by investigating the quotients of an algebra, as $\mfA$ is wild if there exists an ideal $\mathfrak{I} \subseteq \mfA$ such that $\mfA / \mathfrak{I}$ is wild. For finite-dimensional Lie algebras, a classification of wild Lie algebras is known due to Makedonski\u{\i}.
	\begin{thm}[\cite{makedonskii2013on}, Theorem 3] \label{MakedonskiiThm}
		A finite-dimensional Lie algebra over an algebraically closed field is wild if and only if it is neither semisimple, one-dimensional, nor a direct sum of a semisimple Lie algebra and a one-dimensional Lie algebra.
	\end{thm}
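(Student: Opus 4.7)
The plan is to prove both directions of the equivalence by separate case analyses: first, that algebras in the three exceptional classes are not wild, and second, that every other finite-dimensional Lie algebra is wild.

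For the ``only if'' direction I would handle each exceptional class with standard tools. If $\mf{g}$ is semisimple, Weyl's complete reducibility theorem shows that $\fin \mf{g}$ is a semisimple abelian category in which indecomposables coincide with irreducibles, so $\fin \mf{g}$ is not wild. If $\mf{g}$ is one-dimensional, then $\fin \mf{g}$ is equivalent to $\fin \mb{C}[x]$, classified by Jordan normal form, hence tame. For $\mf{g} = \mf{s} \oplus \mb{C} c$ with $\mf{s}$ semisimple, a representation $V$ decomposes as an $\mf{s}$-module as $V = \bigoplus_\lambda L_\lambda \otimes M_\lambda$ into isotypic components, and since $c$ commutes with $\mf{s}$ it preserves each component and acts as $\id_{L_\lambda} \otimes c_\lambda$ for some $c_\lambda \in \End(M_\lambda)$. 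The indecomposable classification thus reduces to the independent Jordan form problem on each $M_\lambda$, which is tame.

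For the ``if'' direction, the key observation is that the two-dimensional abelian Lie algebra is wild: its universal enveloping algebra is $\mb{C}[x,y]$, and $\fin \mb{C}[x,y]$ is the very category of pairs of commuting matrices that defines wildness. Since any surjection $\mf{g} \twoheadrightarrow \mf{q}$ of Lie algebras induces a fully faithful exact pullback functor $\fin \mf{q} \hookrightarrow \fin \mf{g}$, it suffices to produce a wild quotient of $\mf{g}$. Using the Levi decomposition $\mf{g} = \mf{s} \ltimes \rad(\mf{g})$, one observes that the exceptional classes correspond exactly to $\dim \rad(\mf{g}) \leq 1$ (since $\mf{s}$ must act trivially on a one-dimensional ideal), so non-exceptional $\mf{g}$ have $\dim \rad(\mf{g}) \geq 2$. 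When $\dim \mf{g}/[\mf{g},\mf{g}] \geq 2$, there is an obvious two-dimensional abelian quotient and we are done. Otherwise I would analyse $\mf{s}$-invariant ideals inside the nilpotent radical to produce a wild quotient such as the Heisenberg algebra or the two-dimensional non-abelian Lie algebra, each of which must itself be shown to be wild.

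The main obstacle lies in the perfect case $[\mf{g},\mf{g}] = \mf{g}$, for instance $\mf{g} = \mf{s} \ltimes V$ where $V$ is a nontrivial irreducible $\mf{s}$-module: no abelian quotient exists, and in this semidirect case the only proper ideal may be $V$ itself with quotient $\mf{s}$, which is not wild. For such algebras, wildness must be established directly. I expect one would exhibit an explicit exact faithful functor from $\fin \mb{C}[x,y]$ into $\fin \mf{g}$, by selecting two commuting elements of $\mf{g}$ and constructing a family of modules on which their actions can realise arbitrary pairs of commuting matrices while controlling indecomposability and isomorphism classes. Handling such perfect non-semisimple $\mf{g}$ uniformly, together with a clean organisation of the nilpotent, solvable, and mixed subcases, is the technical heart of the argument.
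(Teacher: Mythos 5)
The paper does not prove this statement: it is imported verbatim as \cite{makedonskii2013on}, Theorem~3 and treated as a black box for the wildness argument in Section~\ref{sec:wildness}. There is therefore no proof in the paper against which to compare your attempt; I will instead assess the attempt on its own terms.

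Your framework is the standard one and the easy parts are fine. The ``only if'' direction via Weyl's theorem, Jordan normal form, and the isotypic-component argument for $\mathfrak{s} \oplus \mathbb{C}c$ is correct. For the ``if'' direction, the reduction to finding a wild quotient is legitimate, since a surjection $\mathfrak{g} \twoheadrightarrow \mathfrak{q}$ gives a fully faithful exact embedding $\fin \mathfrak{q} \hookrightarrow \fin \mathfrak{g}$, and the two-dimensional abelian Lie algebra is indeed wild because its enveloping algebra is $\mathbb{C}[x,y]$ on the nose. Your observation that a semisimple Levi factor must act trivially on any one-dimensional ideal, so that the exceptional classes are exactly those with $\dim \rad(\mathfrak{g}) \leq 1$, is also correct.

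However, there are genuine gaps, and you have flagged the main one yourself. First, you assert that the Heisenberg algebra and the two-dimensional non-abelian solvable Lie algebra ``must itself be shown to be wild'' but give no argument; neither is a routine consequence of the $\mathbb{C}[x,y]$ model, since these enveloping algebras are non-commutative Ore extensions, and establishing wildness here is itself a substantive piece of Makedonskii's work (and its predecessors on solvable and nilpotent Lie algebras). Second, and more seriously, the perfect non-semisimple case --- e.g.\ $\mathfrak{g} = \mathfrak{s} \ltimes V$ with $V$ a nontrivial irreducible $\mathfrak{s}$-module, where no abelian quotient is available and every proper quotient may be tame --- is left entirely as ``I expect one would exhibit an explicit exact faithful functor.'' That is precisely the hard content of the theorem; without it the ``if'' direction is not proved. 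Note also that your case split has an overlap: the perfect case is a sub-case of $\dim \mathfrak{g}/[\mathfrak{g},\mathfrak{g}] < 2$, so the ``analyse $\mathfrak{s}$-invariant ideals'' branch and the ``perfect case'' branch should be reconciled into a single clean dichotomy on $\dim \mathfrak{g}/[\mathfrak{g},\mathfrak{g}] \in \{0,1\}$. As it stands, the proposal is a reasonable plan, not a proof, and should cite Makedonskii (as the paper does) rather than attempt to reproduce the argument.
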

	
	Semisimple, one-dimensional Lie algebras, and Lie algebras isomorphic to a direct sum of a semisimple Lie algebra and a one-dimensional Lie algebra are said to be \emph{tame}. Informally, this means that in each dimension $m$, all but a finite number of indecomposable representations occur in one-parameter families. For algebras where one hopes to classify the finite-dimensional indecomposable representations, one would expect the algebra to be tame.
	
	\subsection{The Finitely Generated Representation Theory of $\OX$ when $\srs$ is Genus Zero} \label{MCGammaReps}
	A special case of the ring $\OX$ occurs when $\srs$ is genus zero. In this case, $\srs$ is the Riemann sphere $\Rsphere=\mathbb{C} \cup \{\infty\} \cong \mathbb{CP}^1$ and $\rs$ is a punctured Riemann sphere $\Rsphere \setminus S$ by a finite set of $\rg$-invariant points $S$. It follows that $\OX$ is the commutative associative algebra
	\begin{equation*}
		\left\{\frac{p}{q} \in \mathbb{C}(\polyvar) : \frac{q(\varepsilon)}{p(\varepsilon)} = 0 \Rightarrow \varepsilon \in S\right\}
	\end{equation*}
	whose finitely generated representation theory is tame and completely understood.

	Explicitly, it is easy to see that if $S=\{\infty, \varepsilon_1,\ldots,\varepsilon_n\}$, we have
	\begin{equation*}
		\OX \cong \mathbb{C}[\polyvar, \widehat{\polyvar}_1,\ldots,\widehat{\polyvar}_n],
	\end{equation*}
	where $\widehat{\polyvar}_i=(\polyvar-\varepsilon_i)^{-1}$ for each $1 \leq i \leq n$. If we instead have $S=\{\varepsilon_1,\ldots,\varepsilon_n\}$ such that $\infty \not\in S$, then we simply remove $\polyvar$ from the generating set of the polynomial algebra. That is, 
	\begin{equation*}
		\OX \cong \mathbb{C}[\widehat{\polyvar}_1,\ldots,\widehat{\polyvar}_n].
	\end{equation*}
	
	Any algebra of the second form is isomorphic to an algebra of the first form. One may choose an automorphism $h$ of the Riemann sphere (a M\"{o}bius transformation) and construct a corresponding ring isomorphism $\widehat{h}\colon \OX \rightarrow \mathcal{O}_{h(\rs)}$ defined by $\widehat{h}(f)=f \circ h^{-1}$ for any rational function $f \in \OX$. Specifically, if $h$ is defined such that $h(\genrs)=(\genrs-\varepsilon_i)^{-1}$ for any $\genrs \in \mbX$ and for some $\varepsilon_i \in S$, then $\widehat{h}(\widehat{\polyvar}_i)=\polyvar$.
	
	The tameness of the finitely generated representation theory of $\OX$ arises from the fact that $\OX$ is isomorphic to the localisation of the polynomial algebra $\mathbb{C}[\polyvar]$ at the non-zero multiplicative closed set
	\begin{equation*}
		\widehat{S}=\{(\polyvar-\varepsilon)^d : \varepsilon \in S \setminus \{\infty\}, d \in \mathbb{Z}_{\geq 0}\}
	\end{equation*}
	and is hence a principal ideal domain (PID). Thus, the structure theorem for finitely generated modules over a PID applies. We will provide a brief summary here.
	
	\subsubsection{The indecomposable representations}
	There is only one finitely generated infinite-dimensional indecomposable representation of $\OX$. Namely, this is the representation corresponding to the free module $\OX$. The finite-dimensional indecomposable representations of $\OX$ are in bijective correspondence with the primary ideals of $\OX$. In particular, the $m$-dimensional indecomposable representations correspond to the ideals $I^m$, where $I$ is a prime ideal of $\OX$. Since $\OX$ is a PID, the prime ideals are precisely the maximal ideals, and hence in each dimension $m$, the indecomposable representations are in bijective correspondence with the points of the variety $\mbX$.
	
	The $m$-dimensional indecomposable representations of $\OX$ are (almost) completely determined by Jordan blocks
	\begin{equation*}
		\jordan_{\genrs, m} = 
		\begin{pmatrix}
			\genrs	&	1		&	0		&	\cdots	&	0		\\
			0		&	\genrs	&	1		&	\cdots	&	0		\\
			\vdots	&	\vdots	&	\ddots	&	\ddots	&	\vdots	\\
			0		&	0		&	\cdots	&	\genrs	&	1		\\
			0		&	0		&	\cdots	&	0		&	\genrs
		\end{pmatrix}
	\end{equation*}
	such that $\genrs \not\in S$. Specifically, if $\infty \in S$, then an $m$-dimensional representation $\varphi \in \finrep \OX$ is indecomposable if and only if it is isomorphic to a representation $\indrep{\genrs}{m}$ for some $\genrs \in \mbX$, defined by $\indrep{\genrs}{m}(\polyvar)=\jordan_{\genrs,m}$ and $\indrep{\genrs}{m}(\widehat{\polyvar}_i)=\jordan_{\genrs-\varepsilon_i,m}^{-1}$ for any $i$.
	
	The description of indecomposable representations of $\OX$ in the case where $\infty \not\in S$ is slightly different.
	However, one may again note that there is a ring isomorphism between the $\infty \in S$ and $\infty \not\in S$ cases, and thus the representation theory is equivalent between these cases. For this reason, we will omit an explicit description of the representation theory in the $\infty \not\in S$ case.
	
	An indecomposable representation $\varphi \in \finrep \OX$ is irreducible precisely when $m=1$. That is, whenever $\varphi$ is a 1-dimensional representation. In later examples, we will use particular finite-dimensional indecomposable representations of $\OX$ when $\mbX = \Rsphere \setminus S$. Thus, we have the following definition.
	\begin{defn}
		Let $\rs = \Rsphere \setminus S$. We define the representation $\indrep{\genrs}{m} \in \finrep \OX$ to be the indecomposable representation constructed above.
	\end{defn}
	
	\begin{rem} \label{MatFunction}
		Note that 
		we have chosen $\indrep{\genrs}{m}$ in such a way that 
		\begin{equation*}
			(\indrep{\genrs}{m}((\polyvar-\varepsilon_i)^{-1}))^{-1} + \varepsilon_i\id_m = \jordan_{\genrs,m}.
		\end{equation*}
		Thus in this sense, $\indrep{\genrs}{m}$ respects algebraic manipulation where $\polyvar$ is considered as a placeholder for the matrix $\jordan_{\genrs,m}$. Hence given any $f \in \OX$, we can consider $\indrep{\genrs}{m}(f)$ as a matrix rational function $f(\jordan_{\genrs,m})$ whenever $\genrs \neq \infty$. A well known consequence of this is that
		\begin{equation*}
			\indrep{\genrs}{m}(f) =
			\begin{pmatrix}
				f(\genrs)	& f'(\genrs)	& \frac{f''(\genrs)}{2}	& \cdots		& \frac{f^{(m-1)}(\genrs)}{(m-1)!} \\
				0			& f(\genrs)	& f'(\genrs)				& \cdots		& \frac{f^{(m-2)}(\genrs)}{(m-2)!} \\
				\vdots		& \vdots		& \ddots					& \ddots		& \vdots	\\
				0			& 0			& \cdots					& f(\genrs)	& f'(\genrs)		\\
				0			& 0			& \cdots					& 0			& f(\genrs)		\\
			\end{pmatrix},
		\end{equation*}
		where $f^{(i)}$ is the $i$-th derivative of $f$ with respect to $\polyvar$.
	\end{rem}
	
	\subsubsection{The morphisms of $\fin \OX$}
	Every finite-dimensional indecomposable representation of $\OX$ is uniserial. Namely, we have a unique composition series
	\begin{equation*}
		(V_{\genrs,m}, \indrep{\genrs}{m}) \supset ( V_{\genrs,m-1}, \indrep{\genrs}{m-1}) \supset \ldots \supset (V_{\genrs,1}, \indrep{\genrs}{1}) \supset 0,
	\end{equation*}
	for any indecomposable $(V_{\genrs,m}, \indrep{\genrs}{m}) \in \fin \OX$. In particular, we have 
	\begin{equation*}
		(V_{\genrs,i}, \indrep{\genrs}{i}) / ( V_{\genrs,i-1}, \indrep{\genrs}{i-1}) \cong (V_{\genrs,1}, \indrep{\genrs}{1})
	\end{equation*}
	for all $i$. This induces an infinite ascending chain of subrepresentations
	\begin{equation*}
		0 \subset (V_{\genrs,1}, \indrep{\genrs}{1}) \subset (V_{\genrs,2}, \indrep{\genrs}{2}) \subset \ldots,
	\end{equation*}
	There are irreducible monomorphisms $(V_{\genrs,i}, \indrep{\genrs}{i}) \rightarrow (V_{\genrs,i+1}, \indrep{\genrs}{i+1})$ and irreducible epimorphisms $(V_{\genrs,i}, \indrep{\genrs}{i}) \rightarrow (V_{\genrs,i-1}, \indrep{\genrs}{i-1})$ in $\fin \OX$ for all $i$. In addition, we have 
	\begin{equation*}
		\Hom_{\OX}((V_{\genrs,i}, \indrep{\genrs}{i} ),(V_{\genrs',j}, \indrep{\genrs'}{j}))=0
	\end{equation*}
	for any $\genrs \neq \genrs'$.
	
	\subsection{Representations of $\OX$ Arising from Jet Maps} \label{sec:RepsFromJets}
	Suppose $\srs$ is of arbitrary genus. Since the elements of $\OX$ are holomorphic functions on $\rs$, one can consider a homomorphism of commutative associative algebras
	\begin{equation*}
		\jet{\genrs_0}{m} \colon\OX\rightarrow\mathbb{C}[\polyvar]/(\polyvar^{m+1})
	\end{equation*}
	defined by sending a meromorphic function $f$ to its Taylor expansion about $\genrs_0 \in \rs$ in the local coordinate $\polyvar$ modulo $(\polyvar^{m+1})$. Such a map is called a \emph{jet map}.
	
	By the previous subsection, the category $\fin \mathbb{C}[\polyvar]/(\polyvar^{m+1})$ is completely understood --- it is the full abelian subcategory of $\fin \mathbb{C}[\polyvar]$ consisting of representations $\indrep{0}{r}$ with $r \leq m+1$. One can therefore obtain an $r$-dimensional representation $\jetrep{\genrs}{r}\in\fin \OX$ defined by the following commutative diagram.
	\begin{equation*}
		\xymatrix{
			\OX
				\ar[r]^-{\jet{\genrs}{m}}
				\ar[dr]^-{\jetrep{\genrs}{r}}
			& \mathbb{C}[\polyvar]/(\polyvar^{m+1})
				\ar[d]^-{\indrep{0}{r}}
			\\
			&
			\End_{\mathbb{C}}(V_{0,r})
		}
	\end{equation*}
	Thus for each representation $\indrep{0}{r} \in \fin \mathbb{C}[\polyvar]/(\polyvar^{m+1})$, one obtains a representation $\jetrep{\genrs}{r}$ given by precomposition with $\jet{\genrs}{m}$. We call such a representation a \emph{jet representation} of $\OX$. Similarly, given a morphism
	\begin{equation*}
		\vartheta\colon (V_{0,r}, \indrep{0}{r}) \rightarrow (V_{0,r'}, \indrep{0}{r'}) \in\fin \mathbb{C}[\polyvar],
	\end{equation*}
	one can see that there exists a corresponding morphism
	\begin{equation*}
		\theta\colon  (V_{0,r}, \jetrep{\genrs}{r}) \rightarrow (V_{0,r'}, \jetrep{\genrs}{r'}) \in \fin \OX
	\end{equation*}
	defined by $\theta(v)=\vartheta(v)$ for all $v \in V_{0,r}$. This defines a family of exact functors
	\begin{equation*}
		F_m \colon \fin \mathbb{C}[z]/(z^{m+1}) \rightarrow \fin \OX
	\end{equation*}
	such that $F_{m'}\theta = F_m \theta$ for any
	\begin{equation*}
		\theta \in \Hom_{\OX}((V_{0,r}, \jetrep{\genrs}{r}), (V_{0,r'}, \jetrep{\genrs}{r'}))
	\end{equation*}
	with $r,r' \leq m' \leq m$. Hence for each representation $\jetrep{\genrs}{m}$, one obtains a composition series
	\begin{equation*}
		(V_{0,m}, \jetrep{\genrs}{m}) \supset ( V_{0,m-1}, \jetrep{\genrs}{m-1}) \supset \ldots \supset (V_{0,1}, \jetrep{\genrs}{1}) \supset 0,
	\end{equation*}
	where for each $i$, we have 
	\begin{equation*}
		(V_{0,i}, \jetrep{\genrs}{i}) / ( V_{0,i-1}, \jetrep{\genrs}{i-1}) \cong (V_{0,1}, \jetrep{\genrs}{1}),
	\end{equation*}
	which is necessarily irreducible. This is precisely the image under $F_m$ of the composition series in the previous subsection with $\genrs = 0$.
	
	\begin{rem}
		It is worth noting that whilst the category $\fin \mathbb{C}[\polyvar]/(\polyvar^{m+1})$ is tame (actually, of finite representation-type), we make no such claims for $\fin \OX$.
	\end{rem}
	
	\begin{rem} \label{JetMatFunction}
		Given a function $f \in \OX$, the matrix of the linear map $\jetrep{\genrs}{r}$ is the same as $\indrep{\genrs}{r}$ in Remark~\ref{MatFunction}. This is because we have
		\begin{equation*}
			\indrep{\genrs}{0}\circ\jet{\genrs}{r}(f)= f(\genrs)
			\begin{pmatrix}
				1 		& 0 & \cdots & 0 \\
				0 		& 1 & \ddots &\vdots \\
				\vdots	& \ddots & \ddots & 0 \\
				0		& \cdots & 0 & 1 
			\end{pmatrix}+
			\sum_{i=1}^{r-1} \frac{f^{(i)}(\genrs)}{i!}
			\begin{pmatrix}
				0		& 1 		& 0			& \cdots	& 0 \\
				0		& 0			& 1		 	& \ddots 	& \vdots \\
				0		& 0			& 0			& \ddots	& 0 \\
				\vdots	& \ddots 	& \ddots	& \ddots	& 1 \\
				0		& \cdots	& 0			& 0			&0
			\end{pmatrix}
			^i
		\end{equation*}
	\end{rem}

		\section{Ideals and Quotients of Automorphic Lie Algebras} 
	\label{sec:ideals}	
	Recall that the definition of the Lie bracket of an automorphic Lie algebra $\mfA=(\lieg \otimesC \OX)^\rg$ is given by
	\begin{equation*}
		\left[ \sum_i A_i \otimes f_i, \sum_j B_j \otimes g_j\right]_{\mfA} = \sum_i\sum_j [A_i,B_j]_{\lieg} \otimes f_i g_j.
	\end{equation*}
	An interesting feature of this definition is the associative product on the right-hand-side of the tensor product, which comes from the commutative associative algebra $\OX$. Therefore, it is perhaps natural to wonder precisely what role the representation theory of $\OX$ plays on the structure of $\mfA$. The first few results of this section shows that the representation theory of $\OX$ is closely related to the ideals and quotients of $\mfA$. This applies to any commutative associative algebra $\OX$, and thus these results apply to equivariant map algebras as well as automorphic Lie algebras. 
	
	Throughout this section, we will fix a basis $\basis$ of the Lie algebra $\lieg$ used in the construction of a given automorphic Lie algebra $\mfA$. The first result of this section shows how one can associate to each (not necessarily finite-dimensional) representation $\varphi \in \Rep \OX$ an ideal $\ideal_\varphi$ of $\mfA$ and a corresponding quotient $\mfA / \ideal_\varphi$.
	
	\begin{prop} \label{prop:IdealsFromReps}
		Let $\mfA=\generalALA$ be an automorphic Lie algebra and $\varphi \colon \OX \rightarrow \End_\mathbb{C}(V)$ be a representation of $\OX$. The kernel of the linear map
		\begin{equation*}
			\id_{\lieg} \otimesC \varphi\colon \mfA \rightarrow \lieg \otimesC \End_{\mathbb{C}}(V)
		\end{equation*}
		is an ideal $\ideal_\varphi \subseteq \mfA$. Moreover, $\im(\id_{\lieg} \otimesC \varphi)$ has the structure of a Lie algebra and $\id_{\lieg} \otimesC \varphi$ induces an epimorphism of Lie algebras
		\begin{equation*}
			\mfA \rightarrow \im(\id_{\lieg} \otimesC \varphi)
		\end{equation*}
		with $\im(\id_{\lieg} \otimesC \varphi) \cong \mfA /\ideal_\varphi$.
	\end{prop}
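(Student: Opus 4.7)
The plan is to prove the three assertions in order: first that $\ideal_\varphi := \ker(\id_\lieg \otimes \varphi)$ is a Lie ideal of $\mfA$, then that the image inherits a Lie algebra structure, and finally that the induced map is an isomorphism onto the image. The main work lies in the first step; the other two are formal consequences once the ideal property is established.

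For the ideal property, the subspace property is automatic from linearity, so the content is closure under bracketing with arbitrary elements of $\mfA$. Fix the basis $\basis = \{e_a\}_{a}$ of $\lieg$ and take $x = \sum_i A_i \otimes f_i \in \ideal_\varphi$ and $y = \sum_j B_j \otimes g_j \in \mfA$. Expanding $A_i = \sum_a c_{i,a} e_a$ and $B_j = \sum_b d_{j,b} e_b$ in the basis $\basis$, the condition $x \in \ideal_\varphi$ becomes
\begin{equation*}
\sum_a e_a \otimes \Bigl( \sum_i c_{i,a}\,\varphi(f_i) \Bigr) = 0,
\end{equation*}
which, by linear independence of $\basis$, gives $\sum_i c_{i,a}\,\varphi(f_i) = 0$ in $\End_\mathbb{C}(V)$ for every $a$. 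I would then compute
\begin{equation*}
(\id_\lieg \otimes \varphi)[x,y] = \sum_{i,j} [A_i, B_j] \otimes \varphi(f_i g_j) = \sum_{a,b} [e_a,e_b] \otimes \Bigl(\sum_i c_{i,a}\varphi(f_i)\Bigr)\Bigl(\sum_j d_{j,b}\varphi(g_j)\Bigr),
\end{equation*}
using here precisely that $\varphi$ is a homomorphism of associative algebras so $\varphi(f_i g_j) = \varphi(f_i)\varphi(g_j)$. The factor on the left of the product vanishes for every $a$, so the whole sum is zero and $[x,y] \in \ideal_\varphi$. Thus $\ideal_\varphi$ is a Lie ideal.

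Given that $\ideal_\varphi$ is an ideal, the quotient $\mfA/\ideal_\varphi$ is a Lie algebra in the standard way and the canonical projection $\pi\colon \mfA \to \mfA/\ideal_\varphi$ is a surjective Lie algebra homomorphism. The linear map $\id_\lieg \otimes \varphi$ factors through $\pi$ as $\id_\lieg \otimes \varphi = \bar{\psi} \circ \pi$, where $\bar{\psi}\colon \mfA/\ideal_\varphi \to \lieg \otimes_\mathbb{C} \End_\mathbb{C}(V)$ is an injection of vector spaces by the first isomorphism theorem. Transport the Lie bracket from $\mfA/\ideal_\varphi$ along the resulting linear isomorphism $\mfA/\ideal_\varphi \to \im(\id_\lieg \otimes \varphi)$; this endows $\im(\id_\lieg \otimes \varphi)$ with a Lie algebra structure for which $\id_\lieg \otimes \varphi$ itself becomes a Lie algebra epimorphism, and the induced map $\mfA/\ideal_\varphi \to \im(\id_\lieg \otimes \varphi)$ is an isomorphism of Lie algebras by construction.

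The main obstacle is the ideal step, and specifically the need to be careful about the non-commutativity of $\End_\mathbb{C}(V)$: because the target $\lieg \otimes \End_\mathbb{C}(V)$ does not itself carry a natural Lie bracket extending the one on $\mfA$, one cannot simply say ``$\id_\lieg \otimes \varphi$ is a Lie algebra homomorphism into a Lie algebra''. The basis expansion in $\lieg$ is what lets us extract a scalar identity $\sum_i c_{i,a}\varphi(f_i)=0$ in $\End_\mathbb{C}(V)$ from the tensor identity, and this is what allows the product $\varphi(f_i)\varphi(g_j)$ to annihilate on the correct side irrespective of the order of multiplication.
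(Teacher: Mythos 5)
Your proof is correct and follows essentially the same route as the paper: fix a basis of $\lieg$, observe that membership in the kernel forces the $\End_{\mathbb{C}}(V)$-components to vanish, and then use multiplicativity of $\varphi$ to kill the bracket. The paper writes elements directly as $\sum_{A\in\basis}A\otimes f_A$ rather than carrying the extra expansion coefficients $c_{i,a},d_{j,b}$, and defines the bracket on $\im(\id_{\lieg}\otimesC\varphi)$ explicitly rather than by transport along the isomorphism, but these are cosmetic differences, not a different argument.
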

	\begin{proof}
		We will first prove that $\Ker(\id_{\lieg} \otimesC \varphi)=\ideal_\varphi$ is an ideal of $\mfA$. Suppose
		\begin{equation*}
			a=\sum_{A \in \basis} (A \otimes f_A) \in \mfA \text{ and}
		\end{equation*}
		\begin{equation*}
			b=\sum_{B \in \basis} (B \otimes g_B) \in \ideal_\varphi,
		\end{equation*}
		where each $f_A,g_B \in \OX$. It then follows that
		\begin{equation*}
			\sum_{B \in \basis} (B \otimes \varphi(g_B))=0 \Leftrightarrow \varphi(g_B) =0
		\end{equation*}
		for all $B \in \basis$. Thus, we have
		\begin{align*}
			(\id_{\lieg} \otimesC \varphi)([a,b]_{\mfA}) &= \sum_{A \in \basis}\sum_{B \in \basis} ([A,B]_{\lieg} \otimes \varphi(f_A g_B))\\
			&=\sum_{A \in \basis}\sum_{B \in \basis} [A,B]_{\lieg} \otimes \varphi(f_A) \varphi(g_B) \\
			&=0.
		\end{align*}
		So $[\mfA, \ideal_\varphi] \subseteq \ideal_\varphi$, as required. Thus, $\ideal_\varphi$ is an ideal of $\mfA$.
		
		To show that $\im(\id_{\lieg} \otimesC \varphi)$ has the structure of a Lie algebra, for any elements
		\begin{equation*}
			c = \sum_{C \in \basis} (C \otimes p_C) \qquad \text{and} \qquad d=\sum_{D \in \basis} (D \otimes q_D)
		\end{equation*}
		in $\mfA$, we define the Lie bracket on $\im(\id_{\lieg} \otimesC \varphi)$ to be
		\begin{equation*}
			\left[\sum_{C \in \basis} (C \otimes \varphi(p_C)),\sum_{D \in \basis} (D \otimes \varphi(q_D))\right]_{\im(\id_{\lieg} \otimesC \varphi)}=\sum_{C \in \basis}\sum_{D \in \basis} ([C,D]_{\lieg} \otimes \varphi(p_C)\varphi(q_D)).
		\end{equation*}
		Since $\varphi$ is a linear representation of the commutative algebra $\OX$, it is easy to see that $\im(\id_{\lieg} \otimesC \varphi)$ has the structure of a Lie algebra with respect to this Lie bracket and that $\id_{\lieg} \otimesC \varphi$ induces an (epi)morphism of Lie algebras $\mfA \rightarrow \im(\id_{\lieg} \otimesC \varphi)$. So 
		\begin{equation*}
			(\id_{\lieg} \otimesC \varphi)([c,d]_{\mfA})=[(\id_{\lieg} \otimesC \varphi)(c),(\id_{\lieg} \otimesC \varphi)(d)]_{\im(\id_{\lieg} \otimesC \varphi)},
		\end{equation*}
		as required. Finally, we have
		\begin{equation*}
			\im(\id_{\lieg} \otimesC \varphi) \cong \mfA / \Ker(\id_{\lieg} \otimesC \varphi) = \mfA / \ideal_{\varphi}
		\end{equation*}
		by the first isomorphism theorem.
	\end{proof}
	
	Henceforth, we will denote by $\ideal_\varphi$ the ideal of $\mfA$ associated to a representation $\varphi \in \Rep\OX$. That is, $\ideal_\varphi=\Ker(\id_{\lieg} \otimesC \varphi)$ as above. It is fairly straightforward to show that if $\OX$ is finitely generated and reduced with maximal spectrum $\mbX$ (and thus, $\mbX$ has the structure of an affine variety with coordinate ring $\OX$), then for any 1-dimensional representation $\indrep{\genrs_0}{1} \in \Rep\OX$ defined by $\varphi(f)=f(\genrs_0)$ for some $\genrs_0 \in \mbX$, the map $(\id_{\lieg} \otimesC \indrep{\genrs_0}{1})$ coincides with the point-evaluation map $\ev_{\{\genrs_0\}}$. This motivates the following (general) definition.

	\begin{defn}
		Let $\mfA$ be an automorphic Lie algebra and let $\varphi \in \Rep\OX$ be a representation. Let
		\begin{equation*}
			\xi\colon \im(\id_{\lieg} \otimesC \varphi) \rightarrow \mfA / \ideal_\varphi
		\end{equation*}
		be the canonical isomorphism of Lie algebras. We call the linear map
		\begin{equation*}
			\ev_\varphi=\xi \circ (\id_{\lieg} \otimesC \varphi)\colon \mfA \rightarrow \mfA / \ideal_\varphi,
		\end{equation*}
		the \emph{matrix-evaluation map} of $\mfA$ with respect to $\varphi$.
	\end{defn}
	
	We will now investigate some basic properties of these matrix-evaluations. The next lemma shows that the $\Hom$-spaces in $\Rep \OX$ induce relations between the ideals of $\mfA$.
	\begin{lem} \label{MorphRelations}
		Let $\varphi,\psi \in \Rep \OX$ and $\theta \in \Hom_{\OX}(\varphi,\psi)$ be non-zero.
		\begin{enumerate}[label=(\alph*)]
			\item If $\theta$ is a monomorphism then $\ideal_\varphi \supseteq \ideal_\psi$.
			\item If $\theta$ is an epimorphism then $\ideal_\varphi \subseteq \ideal_\psi$.
			\item If $\theta$ is an isomorphism then $\ideal_\varphi=\ideal_\psi$.
		\end{enumerate}
	\end{lem}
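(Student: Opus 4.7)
The plan is to unpack the definitions and reduce to a linear-algebraic statement about the intertwining property of $\theta$, namely $\theta\circ\varphi(f)=\psi(f)\circ\theta$ for all $f\in\OX$.

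The key preliminary observation is that, after writing an arbitrary element $a\in\mfA$ in terms of the fixed basis as $a=\sum_{A\in\basis}(A\otimes f_A)$ with $f_A\in\OX$, the equality $(\id_\lieg\otimesC\varphi)(a)=\sum_{A\in\basis}A\otimes\varphi(f_A)=0$ in $\lieg\otimesC\End_\mathbb{C}(V)$ is equivalent to $\varphi(f_A)=0$ for every $A\in\basis$. This is because tensoring with a basis of $\lieg$ preserves linear independence. Thus membership in $\ideal_\varphi$ is detected componentwise on the scalar functions $f_A$, and the containment questions for $\ideal_\varphi$ versus $\ideal_\psi$ reduce to comparing the kernels of $\varphi$ and $\psi$ on each $f_A$.

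For part (a), I would take $a=\sum_{A\in\basis}(A\otimes f_A)\in\ideal_\psi$, so $\psi(f_A)=0$ for all $A$. The intertwining relation gives $\theta\circ\varphi(f_A)=\psi(f_A)\circ\theta=0$, and since $\theta$ is a monomorphism (injective) this forces $\varphi(f_A)=0$ for each $A$, hence $a\in\ideal_\varphi$. For part (b), I would take $a\in\ideal_\varphi$, so $\varphi(f_A)=0$ for all $A$; then $\psi(f_A)\circ\theta=\theta\circ\varphi(f_A)=0$, and surjectivity of $\theta$ forces $\psi(f_A)=0$ for each $A$, hence $a\in\ideal_\psi$. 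Part (c) is then immediate by applying (a) and (b) together.

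I do not anticipate any real obstacle: the entire argument is driven by the intertwining identity and the basis decomposition. The only point that deserves mention is the role of the fixed basis $\basis$, which is precisely what allows the test $\varphi(f_A)=0$ to be read off the single tensor equation $\sum_A A\otimes\varphi(f_A)=0$; this is why the section began by fixing such a basis.
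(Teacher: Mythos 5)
Your proof is correct and follows exactly the same route as the paper: reduce membership in $\ideal_\varphi$ to the componentwise vanishing $\varphi(f_A)=0$ using the fixed basis $\basis$, then apply the intertwining identity $\theta\circ\varphi(f)=\psi(f)\circ\theta$ together with injectivity (for (a)) or surjectivity (for (b)) of $\theta$. The paper simply writes ``dual to (a)'' for part (b) rather than spelling it out as you did, but the argument is identical.
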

	\begin{proof}
		(a) First note that $\theta$ satisfies the relation $\theta \varphi(f) = \psi(f) \theta$ for any $f \in \OX$. Let
		\begin{equation*}
			\sum_{A \in \basis} (A \otimes f_A) \in \Ker(\id_{\lieg} \otimesC \psi) = \ideal_\psi,
		\end{equation*}
		where each $f_A \in \OX$. Then for any $A\in \basis$, we have $\theta \varphi(f_A) = \psi(f_A) \theta=0$. But since $\theta$ is a monomorphism, we have for any $A \in \basis$ that
		\begin{equation*}
			\varphi(f_A)=0 \Rightarrow \sum_{A \in \basis} (A \otimes f_A) \in \Ker(\id_{\lieg} \otimesC \varphi)=\ideal_\varphi
		\end{equation*}
		So $\ideal_\varphi \supseteq \ideal_\psi$, as required.

		(b) The proof is dual to (a).
		
		(c) This follows directly from (a) and (b).
	\end{proof}
	
	Unfortunately, the converse to the above is not true in general. A counter-example is presented below.
	
	\begin{exam} \label{ex:sl2Z5}
		Let $\zeta$ be a primitive fifth root of unity. Consider the automorphic Lie algebra $\mfA=\mathfrak{A}(\sltwo,\mathbb{X}, \rg, \sigma_{\sltwo},\sigma_\mbX)$, where $\mbX=\Rsphere \setminus \{\infty\}$,
		\begin{equation*}
			\rg=\mathbb{Z} / 5 \mathbb{Z}=\langle r \rangle
		\end{equation*}
		and $r$ acts on $\sltwo$ by conjugation with the matrix
		\begin{equation*}
			\begin{pmatrix}
				\zeta 	& 	0 \\
				0		&	\zeta^{-1}
			\end{pmatrix}
		\end{equation*}
		and on $\Rsphere$ by multiplication with $\zeta$. Then $\OX \cong \mathbb{C}[\polyvar]$ and $\mfA$ has a basis
		\begin{equation*} 
			\left\{
			\begin{pmatrix}
				\polyvar^{5j} 	& 	0 \\
				0		&	-\polyvar^{5j}
			\end{pmatrix},
			\begin{pmatrix}
				0 	& 	\polyvar^{5j+3} \\
				0	&	0
			\end{pmatrix},
			\begin{pmatrix}
				0 			& 	0 \\
				\polyvar^{5j+2}	&	0
			\end{pmatrix}
			: j \geq 0
			\right\}.
		\end{equation*}
		Now consider the representations $\indrep{0}{1} \subset \indrep{0}{2} \subset \indrep{0}{3} \in \fin \OX$ defined in Section~\ref{MCGammaReps}. Since $\indrep{0}{m}^m=0$, it is easy to see that
		\begin{align*}
			\ideal_{\indrep{0}{1}}&=\left\langle
			\begin{pmatrix}
				\polyvar^{5(j+1)} 	& 	0 \\
				0		&	-\polyvar^{5(j+1)}
			\end{pmatrix},
			\begin{pmatrix}
				0 	& 	\polyvar^{5j+3} \\
				0	&	0
			\end{pmatrix},
			\begin{pmatrix}
				0 			& 	0 \\
				\polyvar^{5j+2}	&	0
			\end{pmatrix}
			: j \geq 0
			\right\rangle, \\
			\ideal_{\indrep{0}{2}}&=\ideal_{\indrep{0}{1}} \\
			\ideal_{\indrep{0}{3}}&=\left\langle
			\begin{pmatrix}
				\polyvar^{5(j+1)} 	& 	0 \\
				0		&	-\polyvar^{5(j+1)}
			\end{pmatrix},
			\begin{pmatrix}
				0 	& 	\polyvar^{5j+3} \\
				0	&	0
			\end{pmatrix},
			\begin{pmatrix}
				0 			& 	0 \\
				\polyvar^{5(j+1)+2}	&	0
			\end{pmatrix}
			: j \geq 0
			\right\rangle,
		\end{align*}
		and yet $\indrep{0}{1} \not\cong \indrep{0}{2}$.
	\end{exam}
	
	Any finite-dimensional representation of $\OX$ can be written as a finite direct sum of indecomposable representations. Thus, it is natural to investigate the relations between the ideals given by a direct sum of representations of $\OX$.
	\begin{lem} \label{DirectSumIdeal}
		Let $\varphi \in \Rep \OX$ and suppose $\varphi = \phi \oplus \psi$ for some representations $\phi,\psi \in \Rep\OX$. Then $\ideal_\varphi=\ideal_\phi \cap \ideal_\psi$.
	\end{lem}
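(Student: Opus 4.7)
The plan is to prove the equality by showing both inclusions, using the direct description of $\ideal_\varphi$ as a kernel on the basis $\basis$.

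For the inclusion $\ideal_\varphi\subseteq \ideal_\phi\cap\ideal_\psi$, I would invoke Lemma~\ref{MorphRelations}(b). The direct sum $\varphi=\phi\oplus\psi$ comes equipped with the two canonical projection epimorphisms $\varphi\twoheadrightarrow\phi$ and $\varphi\twoheadrightarrow\psi$ in $\Rep\OX$, so part (b) of that lemma immediately gives $\ideal_\varphi\subseteq\ideal_\phi$ and $\ideal_\varphi\subseteq\ideal_\psi$, hence $\ideal_\varphi\subseteq\ideal_\phi\cap\ideal_\psi$.

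For the reverse inclusion, I would argue directly from the definition of the kernel on a basis. Writing any element of $\mfA$ as $a=\sum_{A\in\basis}(A\otimes f_A)$ with $f_A\in\OX$, the condition $(\id_\lieg\otimesC\varphi)(a)=0$ is, by linear independence of $\basis$ inside $\lieg\otimesC\End_{\mathbb{C}}(V_\phi\oplus V_\psi)$, equivalent to $\varphi(f_A)=0$ for every $A\in\basis$. Since $\varphi(f_A)$ acts on $V_\phi\oplus V_\psi$ by the block-diagonal operator $\phi(f_A)\oplus\psi(f_A)$, its vanishing is equivalent to the simultaneous vanishing of $\phi(f_A)$ and $\psi(f_A)$. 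Thus $a\in\ideal_\varphi$ if and only if $a\in\ideal_\phi$ and $a\in\ideal_\psi$, which gives $\ideal_\phi\cap\ideal_\psi\subseteq\ideal_\varphi$ (in fact, equality directly).

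There is no real obstacle here; the only point requiring any care is the step where one passes from $\varphi(f_A)=0$ for all $A$ to $(\id_\lieg\otimesC\varphi)(a)=0$, which uses the fact that $\basis$ is a basis of $\lieg$ and hence that the elementary tensors $A\otimes\varphi(f_A)$ are independent inside $\lieg\otimesC\End_{\mathbb{C}}(V)$. One can equivalently sidestep the direct computation altogether by observing that the direct sum decomposition yields a split short exact sequence $0\to\phi\to\varphi\to\psi\to 0$ in $\Rep\OX$ and applying both parts of Lemma~\ref{MorphRelations} twice, but the basis argument is the shortest route.
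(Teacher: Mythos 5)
Your proposal is correct and takes essentially the same approach as the paper: both prove the result by the direct basis argument, observing that $(\id_\lieg\otimesC\varphi)(a)=0$ is equivalent (by linear independence of $\basis$) to $\varphi(f_A)=0$ for all $A\in\basis$, and that $\varphi(f_A)=\phi(f_A)\oplus\psi(f_A)$ vanishes iff both summands do. Your use of Lemma~\ref{MorphRelations}(b) for the forward inclusion is a valid but redundant detour, since — as you yourself note — the direct computation already yields the equality in one pass.
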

	\begin{proof}
		Suppose
		\begin{equation*}
			a=\sum_{A \in \basis} (A \otimes f_A) \in \Ker(\id_{\lieg} \otimesC \varphi) = \ideal_\varphi,
		\end{equation*}
		where each $f_A \in \OX$. Recall that 
		\begin{equation*}
			\sum_{A \in \basis} (A \otimes \varphi(f_A)) = 0 \Leftrightarrow \varphi(f_A)=0
		\end{equation*}
		for any $A \in \basis$. But since $\varphi= \phi \oplus \psi$, this means that $(\phi(f_A),\psi(f_A))=0$, which is true if and only if $\phi(f_A)=0$ and $\psi(f_A)=0$. Thus, $a \in \ideal_\phi$ and $a \in \ideal_\psi$, and hence $a \in \ideal_\phi \cap \ideal_\psi$. The converse argument is similar, and so $\ideal_\varphi=\ideal_\phi \cap \ideal_\psi$.
	\end{proof}
	
	\begin{rem}
		A consequence of Lemma~\ref{DirectSumIdeal} is that 
		\begin{equation*}
			[\ideal_\varphi, \ideal_\psi] \subseteq \ideal_{\varphi \oplus \psi} \subseteq \ideal_\varphi + \ideal_\psi.
		\end{equation*}
	\end{rem}

	\subsection{Ideals Arising from Jet Representations} \label{sec:ChainsOfIdeals}
	In this subsection, we investigate the ideals and quotients of $\mfA$ that arise from the jet representations $\jetrep{\genrs}{m}$ of $\OX$, as defined in Section~\ref{sec:RepsFromJets}. For the purposes of readability, we will denote for some $\genrs_0 \in \mbX$ and $m \in \strictpos$ the ideal $\ideal_{\genrs_0,m}$ of $\mfA$ associated to the representation $\jetrep{\genrs_0}{m} \in \Rep\OX$ (as defined in Section~\ref{sec:RepsFromJets}).
	
	\begin{prop}\label{IdealInvariant}
		Given an automorphic Lie algebra $\mfA=\generalALA$, we have $\ideal_{\gamma \genrs_0,m}=\ideal_{\genrs_0,m}$ for any $\gamma \in \rg$.
	\end{prop}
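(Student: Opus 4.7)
The plan is to characterise $\ideal_{\genrs_0,m}$ intrinsically in terms of order of vanishing at $\genrs_0$, and then transport this characterisation between $\genrs_0$ and $\gamma\genrs_0$ using only the fact that elements of $\mfA$ are $\rg$-fixed.

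First I would identify the kernel of $\jetrep{\genrs_0}{m} = \indrep{0}{m}\circ \jet{\genrs_0}{m}$ as a subspace of $\OX$. Using Remark~\ref{JetMatFunction} (or, equivalently, the explicit form of $\indrep{0}{m}$), the matrix $\jetrep{\genrs_0}{m}(f)$ vanishes iff $f(\genrs_0)=f'(\genrs_0)=\cdots=f^{(m-1)}(\genrs_0)=0$, which is the coordinate-free statement that $f$ vanishes at $\genrs_0$ to order at least $m$. Call this subspace $K_{\genrs_0,m}\subseteq \OX$; it is the $m$-th power of the maximal ideal at $\genrs_0$ and therefore independent of the choice of local coordinate used in the definition of $\jet{\genrs_0}{m}$. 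Writing any $a\in\mfA$ in the basis form $a=\sum_{A\in\basis}A\otimesC f_A$ and using linear independence of $\basis$, the condition $(\id_\lieg\otimesC \jetrep{\genrs_0}{m})(a)=0$ becomes $\jetrep{\genrs_0}{m}(f_A)=0$ for every $A$. Hence
\begin{equation*}
\ideal_{\genrs_0,m} \;=\; \mfA \cap \bigl(\lieg\otimesC K_{\genrs_0,m}\bigr).
\end{equation*}

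Next I would compare $K_{\genrs_0,m}$ and $K_{\gamma\genrs_0,m}$. For any $\gamma\in\rg$, the map $f\mapsto \gamma f=f\circ\gamma^{-1}$ is a ring automorphism of $\OX$, and since $\sigma_\mbX(\gamma,\cdot)$ is a biholomorphism of $\rs$ it preserves orders of vanishing: $\gamma f$ vanishes to order $\geq m$ at $\gamma\genrs_0$ iff $f$ vanishes to order $\geq m$ at $\gamma^{-1}(\gamma\genrs_0)=\genrs_0$. Consequently $\sigma_\OX(\gamma)\, K_{\genrs_0,m}=K_{\gamma\genrs_0,m}$. The linear endomorphism $\sigma_\lieg(\gamma)\otimesC \sigma_\OX(\gamma)$ of $\lieg\otimesC\OX$ therefore carries $\lieg\otimesC K_{\genrs_0,m}$ onto $\lieg\otimesC K_{\gamma\genrs_0,m}$.

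Finally, since every $a\in\mfA$ satisfies $\bigl(\sigma_\lieg(\gamma)\otimesC \sigma_\OX(\gamma)\bigr)a=a$, one has $a\in \lieg\otimesC K_{\genrs_0,m}$ iff $a=\bigl(\sigma_\lieg(\gamma)\otimesC \sigma_\OX(\gamma)\bigr)a \in \lieg\otimesC K_{\gamma\genrs_0,m}$. Intersecting with $\mfA$ gives $\ideal_{\genrs_0,m}=\ideal_{\gamma\genrs_0,m}$, as required.

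The only substantive step is establishing the intrinsic characterisation of $K_{\genrs_0,m}$ as the functions vanishing to order $\geq m$ at $\genrs_0$, independent of the local-coordinate choice implicit in $\jet{\genrs_0}{m}$; once this is in hand, the rest follows purely from the biholomorphic nature of $\sigma_\mbX(\gamma,\cdot)$ and the $\rg$-invariance of elements of $\mfA$, with no computation in $\lieg$ beyond noting that $\sigma_\lieg(\gamma)$ is a linear isomorphism.
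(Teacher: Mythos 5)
Your argument is correct and rests on the same two ingredients as the paper's proof: that the action of $\gamma$ on $\OX$ preserves order of vanishing (transported from $\genrs_0$ to $\gamma\genrs_0$), and that the $\rg$-invariance of an element $a\in\mfA$ lets one transfer the resulting vanishing condition from the functions $\gamma f_A$ back to the $f_A$ themselves. The difference is one of packaging rather than substance. The paper proves the first ingredient by an explicit chain-rule computation, expanding $(f_A\circ\gamma^{-1})^{(k)}$ as a sum of terms $(f_A^{(i)}\circ\gamma^{-1})\,p_i$ with $i\le k$, and then invokes the fixed-point property together with $\gamma\in\Aut(\lieg)$ somewhat tersely at the end. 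You instead extract the cleaner structural statement $\ideal_{\genrs_0,m}=\mfA\cap\bigl(\lieg\otimesC K_{\genrs_0,m}\bigr)$, observe that $K_{\genrs_0,m}$ is intrinsically the order-$\ge m$ vanishing ideal (so coordinate-independent) and that a biholomorphism carries $K_{\genrs_0,m}$ to $K_{\gamma\genrs_0,m}$, and then finish with the one-line fixed-point argument. This buys you a derivative-free proof that scales without change to any punctured compact Riemann surface and makes visible exactly which pieces of data the ideal depends on. It is mathematically equivalent to the paper's chain-rule calculation, which is in effect a verification of the standard fact you cite.
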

	\begin{proof}
		Let $a=\sum_{A \in \basis} A \otimes f_A \in \ideal_{\genrs_0,m}$. We begin the proof with the claim that $(\gamma f_A)^{(k)}(\gamma \genrs_0)=0$ for any $0 \leq k < m$.
		
		For $k=0$, the proof of this claim is straightforward. We simply note that 
		\begin{equation*}
			(\gamma f_A)(\gamma \genrs_0)=f_A(\gamma^{-1}\gamma \genrs_0)=f_A(\genrs_0)=0
		\end{equation*}
		for any $A \in \basis$. Now for any $k \geq 1$, it follows that
		\begin{equation*}
			(\gamma f_A)^{(k)} = (f_A \circ \gamma^{-1})^{(k)}=\sum_{i=1}^k (f_A^{(i)} \circ \gamma^{-1})p_i,
		\end{equation*}
		where each $p_i$ is some holomorphic function. But by Remark~\ref{JetMatFunction}, we know that $f_A^{(i)}(\genrs_0)=0$ for all $0 \leq k < m$. So
		\begin{equation*}
			(\gamma f_A)^{(k)}(\gamma \genrs_0) = \sum_{i=1}^k f_A^{(i)}(\genrs_0)p_i(\gamma\genrs_0)=0
		\end{equation*}
		for any $1 \leq k \leq m-1$, which completes the proof for our claim above.
		
		A consequence of the above claim is that $a \in \ideal_{\gamma \genrs_0, m}$, since $\mfA$ is a Lie algebra of fixed points and $\gamma$ acts on $\lieg$ by automorphisms. So $\ideal_{\genrs_0, m} \subseteq \ideal_{\gamma \genrs_0, m}$ for any $\gamma \in \rg$, any $\genrs_0 \in \mbX$ and any $m \in \strictpos$. To show that $\ideal_{\genrs_0, m} \supseteq \ideal_{\gamma \genrs_0, m}$ and hence that $\ideal_{\genrs_0, m} = \ideal_{\gamma \genrs_0, m}$, we simply note that by an identical argument we have $\ideal_{\gamma \genrs_0, m} \subseteq \ideal_{\gamma^{-1} \gamma \genrs_0, m} = \ideal_{\genrs_0, m}$.
	\end{proof}
	
	One can construct various chains of ideals of $\mfA$, and obtain information regarding their structure. As a result of Section~\ref{sec:RepsFromJets}, an immediate consequence of the Lemma~\ref{MorphRelations} is the following.
	
	\begin{lem} \label{IdealChain}
		Let $\jetrep{\genrs_0}{1} \in \fin \OX$ be an irreducible representation as defined in Section~\ref{sec:RepsFromJets}. Then there exists a weakly descending chain of ideals
		\begin{equation*}
			\mfA \supset \ideal_{\genrs_0,1} \supseteq \ideal_{\genrs_0,2} \supseteq \ideal_{\genrs_0,3} \supseteq \ldots
		\end{equation*}
		of $\mfA$.
	\end{lem}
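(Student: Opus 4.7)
The plan is to obtain the chain as a direct corollary of Lemma~\ref{MorphRelations}(a), using the subrepresentation structure of jet representations already developed in Section~\ref{sec:RepsFromJets}. For each $i \geq 1$, I would fix any $m \geq i+1$ and consider the truncation monomorphism $(V_{0,i},\indrep{0}{i}) \hookrightarrow (V_{0,i+1},\indrep{0}{i+1})$ appearing in the composition series of $\indrep{0}{m} \in \fin \mathbb{C}[\polyvar]/(\polyvar^{m+1})$. Applying the exact functor $F_m$ from Section~\ref{sec:RepsFromJets} produces a monomorphism $\theta_i\colon (V_{0,i},\jetrep{\genrs_0}{i}) \hookrightarrow (V_{0,i+1},\jetrep{\genrs_0}{i+1})$ in $\fin \OX$, and by the compatibility $F_{m'}\theta = F_m\theta$ noted in that subsection this monomorphism is independent of the particular $m$ chosen.

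Feeding each $\theta_i$ into Lemma~\ref{MorphRelations}(a) then gives immediately
\begin{equation*}
\ideal_{\genrs_0,i} = \ideal_{\jetrep{\genrs_0}{i}} \supseteq \ideal_{\jetrep{\genrs_0}{i+1}} = \ideal_{\genrs_0,i+1},
\end{equation*}
which concatenates to the desired weakly descending chain $\ideal_{\genrs_0,1} \supseteq \ideal_{\genrs_0,2} \supseteq \ldots$. For the leading inclusion $\mfA \supset \ideal_{\genrs_0,1}$, I would observe that $\id_{\lieg} \otimesC \jetrep{\genrs_0}{1}$ coincides with the classical point-evaluation $\ev_{\{\genrs_0\}}\colon \mfA \to \lieg^{\genrs_0}$ (this is the remark made just after Proposition~\ref{prop:IdealsFromReps}), so $\ideal_{\genrs_0,1}$ is proper exactly when this evaluation is non-zero---the non-degenerate case where $\lieg^{\genrs_0}\neq 0$ and $\mfA$ admits an equivariant section realising a non-zero value in $\lieg^{\genrs_0}$.

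No serious obstacle is anticipated here: essentially all the machinery has already been assembled. The exact functors $F_m$ from Section~\ref{sec:RepsFromJets} supply the requisite monomorphisms between successive jet representations, and Lemma~\ref{MorphRelations} translates monomorphisms in $\Rep \OX$ directly into reverse-order inclusions of ideals in $\mfA$. The only point requiring a brief additional remark is strictness at the leading step, which reduces to a standard surjectivity property of point-evaluation for equivariant map algebras.
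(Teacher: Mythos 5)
Your proposal is correct and follows essentially the same route as the paper: the paper's proof is precisely to invoke the composition series of jet representations from Section~\ref{sec:RepsFromJets}, obtain the induced chain of monomorphisms $\jetrep{\genrs_0}{1} \to \jetrep{\genrs_0}{2} \to \cdots$ in $\fin\OX$, and then feed these into Lemma~\ref{MorphRelations}(a) to reverse the inclusions at the level of ideals. The extra paragraph you add about strictness of the leading inclusion $\mfA \supset \ideal_{\genrs_0,1}$ is not part of the paper's proof of this lemma (the paper defers that to Lemma~\ref{lem:IdealsNonTrivial}, where properness is proved via \cite{neher2012irreducible}); your sketch of it is on the right track but is stated informally and would need the same citation to be rigorous.
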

	\begin{proof}
		This follows trivially from the composition series of finite-dimensional representations of $\OX$ given in Section~\ref{sec:RepsFromJets} (with $m$ sufficiently large), since this induces a sequence of monomorphisms
		\begin{equation*}
			\jetrep{\genrs_0}{1} \rightarrow \jetrep{\genrs_0}{2} \rightarrow \jetrep{\genrs_0}{3} \rightarrow \cdots
		\end{equation*}
		in $\fin \OX$.
	\end{proof}

	\begin{lem} \label{lem:IdealsNonTrivial}
		Let $\mfA=\generalALA$ be an automorphic Lie algebra. Then for any $\genrs_0 \in \rs$ and any $m \in \strictpos$, the ideal $\ideal_{\genrs_0,m} \neq 0,\mfA$.
	\end{lem}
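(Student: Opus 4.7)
The plan is to verify the two non-triviality assertions separately, in each case by constructing an explicit element of $\mfA$ via a combination of Riemann-Roch on $\srs$ and group averaging.

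\textbf{Non-triviality ($\ideal_{\genrs_0,m}\neq 0$).} By Remark~\ref{JetMatFunction}, an element $\sum_{A\in\basis} A\otimes f_A\in\mfA$ lies in $\ideal_{\genrs_0,m}$ precisely when each $f_A$ vanishes to order at least $m$ at $\genrs_0$. Using Riemann-Roch on $\srs$, with unrestricted orders of poles on the nonempty set $\poles$, I would construct $h\in\OX$ vanishing to order $\geq m$ at every point of the finite $\rg$-orbit $\rg\cdot \genrs_0$, yet normalised so that $h(\genrstwo_0)=1$ and $h(\gamma^{-1}\genrstwo_0)=0$ for $\gamma\ne e$, where $\genrstwo_0\in\rs$ is a chosen auxiliary point outside $\rg\cdot\genrs_0$ with trivial stabiliser (such points form a cofinite subset of $\rs$). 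For any nonzero $A\in\lieg$, the averaged element
\[
\bar a := \sum_{\gamma\in\rg} \gamma A \otimes \gamma h
\]
is automatically $\rg$-invariant, hence in $\mfA$; it lies in $\ideal_{\genrs_0,m}$ because $\rg\cdot \genrs_0$ is $\rg$-stable so every $\gamma h$ still vanishes to order $\geq m$ at $\genrs_0$; and $\ev_{\{\genrstwo_0\}}(\bar a)=A\neq 0$ shows $\bar a\neq 0$.

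\textbf{Properness ($\ideal_{\genrs_0,m}\neq \mfA$).} Here I would exhibit an element of $\mfA$ whose image under $\id_{\lieg}\otimes \jetrep{\genrs_0}{m}$ is nonzero. Applying Riemann-Roch again, choose $f\in\OX$ with some prescribed nontrivial Taylor expansion up to order $m-1$ at $\genrs_0$ while vanishing to order $\geq m$ at every remaining orbit point $\gamma^{-1}\genrs_0$ for $\gamma\notin\rg_{\genrs_0}$. Form
\[
\bar b := \sum_{\gamma\in\rg} \gamma A\otimes \gamma f \in \mfA.
\]
Only the terms with $\gamma\in\rg_{\genrs_0}$ contribute to the jet at $\genrs_0$, reducing the image to $\sum_{\gamma\in\rg_{\genrs_0}} \gamma A\otimes \jetrep{\genrs_0}{m}(\gamma f)$. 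Choosing $A$ in a nonzero $\rg_{\genrs_0}$-eigenspace of $\lieg$ and matching this choice with a compatible nonzero Taylor coefficient of $f$, using a local coordinate on which $\rg_{\genrs_0}$ acts by a character (developed in Section~\ref{sec:closer}), ensures that this jet does not cancel under the averaging and is therefore nonzero.

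The principal obstacle is the Riemann-Roch step: producing meromorphic functions on the compact surface $\srs$ with precisely controlled vanishing at a prescribed finite set of points while retaining prescribed values elsewhere. This is standard once we allow the pole divisor on $\poles$ to be sufficiently large, which is exactly why the nonemptiness of $\poles$ is essential. A secondary subtlety arises in the properness argument, where the eigenspace matching must be performed carefully so that the $\rg_{\genrs_0}$-averaging of the jet does not collapse; this is handled through the linearisation of the stabiliser action on a local coordinate at $\genrs_0$.
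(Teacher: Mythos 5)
Your proof is correct in outline, but it takes a genuinely different route from the paper, and in effect anticipates the machinery of Section~\ref{sec:closer}. The paper's proof of Lemma~\ref{lem:IdealsNonTrivial} is short and abstract: it handles $m=1$ by citing \cite{neher2012irreducible} for the fact that $\mfA/\ideal_{\genrs_0,1}\cong\lieg^{\genrs_0}$ is a nontrivial quotient; it then deduces $\ideal_{\genrs_0,m}\neq\mfA$ for $m>1$ from the chain inclusion $\ideal_{\genrs_0,m}\subseteq\ideal_{\genrs_0,1}$ of Lemma~\ref{IdealChain}; and it establishes $\ideal_{\genrs_0,m}\neq 0$ by a pure dimension count, since $\mfA/\ideal_{\genrs_0,m}\cong\im(\id_{\lieg}\otimesC\jetrep{\genrs_0}{m})$ is finite-dimensional whereas $\mfA$ is infinite-dimensional. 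Your approach instead builds explicit witnesses via Riemann--Roch and $\rg$-averaging. This is legitimate, but the interpolation step you invoke is precisely Lemma~\ref{lem:interpolation} (holomorphic interpolation), which the paper only proves in Section~\ref{sec:closer}, and your full averaging-plus-eigenspace construction essentially reproves the surjectivity in Theorem~\ref{thm:composition factors}. As a standalone argument it is heavier than necessary at this point of the paper, though it does give constructive information the paper's proof does not.

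Two smaller remarks. First, for the non-vanishing direction your construction is clean: evaluating $\bar a=\sum_{\gamma}\gamma A\otimes\gamma h$ at an auxiliary free point $\genrstwo_0$ gives $A\neq 0$ unconditionally, which is a nice alternative to the paper's dimension count (both work). Second, for the properness direction your ``eigenspace matching'' step contains the same implicit hypothesis as the paper's NSS citation: you need some eigenspace $\lieg_{\bar{j}}$ of $\rg_{\genrs_0}$ with $j$ in the admissible range $\{0,\ldots,\min(m,\nu_0)-1\}$ to be nonzero (for $m=1$ this is exactly $\lieg^{\genrs_0}\neq 0$). You gesture at this with ``compatible nonzero Taylor coefficient'' but do not spell out why such a matching $j<m$ exists; the paper carries the same debt, delegating it to \cite{neher2012irreducible}. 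If you were to write this out in full you should make that existence explicit rather than leave it as an assertion about the averaging not collapsing.
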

	\begin{proof}
		First note that $\ideal_{\genrs_0,1} \neq 0$ and $\ideal_{\genrs_0,1} \neq \mfA$, since $\mfA / \ideal_{\genrs_0,1} \cong \lieg^{\genrs_0}$, which is a non-trivial quotient of $\mfA$ by the results of \cite{neher2012irreducible}. Now Lemma~\ref{IdealChain} implies that $\ideal_{\genrs_0,m} \subseteq \ideal_{\genrs_0,1}$ for any $m>1$. So trivially, we have $\ideal_{\genrs_0,m} \neq \mfA$. It remains to show that $\ideal_{\genrs_0,m}\neq 0$, or equivalently, that the quotient $\mfA / \ideal_{\genrs_0,m} \neq \mfA$ . By Proposition~\ref{prop:IdealsFromReps}, the quotient $\mfA / \ideal_{\genrs_0,m} \cong \im (\id_{\lieg} \otimesC \jetrep{\genrs_0}{m})$, which is necessarily finite-dimensional, since both $\lieg$ and $\jetrep{\genrs_0}{m}$ are finite-dimensional. But $\mfA$ is infinite-dimensional. Thus, $\mfA / \ideal_{\genrs_0,m} \neq \mfA$ and hence, $\ideal_{\genrs_0,m} \neq 0$ as required.
	\end{proof}

	\begin{lem} \label{ProperBase}
		Let $\mfA=\generalALA$ be an automorphic Lie algebra. Then for any $\genrs_0 \in \mbX$ and any $n \in \strictpos$, there exists an integer $N>n$ such that $\ideal_{\genrs_0,N} \neq \ideal_{\genrs_0,n}$.
	\end{lem}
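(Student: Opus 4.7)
The plan is to argue by contradiction, using that the full intersection of the descending chain is trivial while any single term is non-trivial.

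Suppose for contradiction that for some fixed $\genrs_0 \in \mbX$ and some $n \in \strictpos$ we have $\ideal_{\genrs_0, N} = \ideal_{\genrs_0, n}$ for every $N \geq n$. By Lemma~\ref{IdealChain}, the sequence $(\ideal_{\genrs_0, m})_{m \geq 1}$ is weakly descending, so under the contradictory hypothesis the chain stabilises from $n$ onwards and therefore
\begin{equation*}
\ideal_{\genrs_0, n} \;=\; \bigcap_{m \geq 1} \ideal_{\genrs_0, m}.
\end{equation*}
The heart of the argument is to compute this intersection and show it is zero.

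To compute the intersection, fix the basis $\basis$ and write a general element of $\mfA$ as $a = \sum_{A \in \basis} A \otimes f_A$ with $f_A \in \OX$. By Remark~\ref{JetMatFunction}, the matrix $\jetrep{\genrs_0}{m}(f_A)$ is the upper-triangular Toeplitz matrix whose entries are the normalised Taylor coefficients $\tfrac{f_A^{(k)}(\genrs_0)}{k!}$ for $0 \leq k \leq m-1$. Consequently $a \in \ideal_{\genrs_0, m}$ if and only if $f_A^{(k)}(\genrs_0) = 0$ for every $A \in \basis$ and every $0 \leq k < m$. Taking the intersection over all $m$ forces $f_A^{(k)}(\genrs_0) = 0$ for all $k \geq 0$ and all $A \in \basis$. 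Since $\genrs_0 \in \rs$ and each $f_A \in \OX$ is holomorphic at $\genrs_0$, the Taylor series of $f_A$ at $\genrs_0$ is identically zero, so $f_A$ vanishes on a neighbourhood of $\genrs_0$. Because $\srs$ is a (connected) compact Riemann surface and $\rs = \srs \setminus \poles$ is obtained by removing a finite set, $\rs$ is connected; the identity theorem for meromorphic functions on a connected Riemann surface then forces $f_A \equiv 0$ for every $A \in \basis$. Hence $a = 0$ and $\bigcap_{m \geq 1} \ideal_{\genrs_0, m} = 0$.

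Combining these two steps yields $\ideal_{\genrs_0, n} = 0$, which directly contradicts Lemma~\ref{lem:IdealsNonTrivial}. The only real obstacle is the intersection computation, and more specifically the invocation of the identity theorem: once one reads off the kernel description of $\ideal_{\genrs_0, m}$ from Remark~\ref{JetMatFunction}, the argument is almost immediate, but it is important that the point $\genrs_0$ lies in $\rs$ (so that the $f_A$ are genuinely holomorphic there, and hence determined by their Taylor series) and that $\rs$ is connected (so analytic continuation reaches every point where the $f_A$ are defined).
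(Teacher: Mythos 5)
Your proof is correct and rests on exactly the same ingredients as the paper's: the characterisation of $\ideal_{\genrs_0,m}$ via vanishing Taylor coefficients (Remark~\ref{JetMatFunction}), the identity theorem for holomorphic functions on the connected surface $\rs$, and the non-triviality of $\ideal_{\genrs_0,n}$ from Lemma~\ref{lem:IdealsNonTrivial}. The only difference is presentational: you argue by contradiction, packaging the analyticity argument as the statement $\bigcap_{m \geq 1} \ideal_{\genrs_0,m} = 0$, whereas the paper argues directly by picking a non-zero $a \in \ideal_{\genrs_0,n}$, locating a non-constant coefficient $f_B$, and exhibiting an $N$ with $f_B^{(N-1)}(\genrs_0) \neq 0$ so that $a \notin \ideal_{\genrs_0,N}$.
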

	\begin{proof}
		Let $n \in \strictpos$ and consider a non-zero element
		\begin{equation*}
			a =\sum_{A \in \basis} A \otimes f_A \in \ideal_{\genrs_0,n}
		\end{equation*}
		whose existence we may assume due to Lemma~\ref{lem:IdealsNonTrivial}. Trivially, we know that $a \in \ideal_{\genrs_0,n}$ if and only if $\jetrep{\genrs_0}{n}(f_A) = 0$ for all $A \in \basis$. In fact, it follows from the definition of $\jetrep{\genrs_0}{n}$ and Remark~\ref{JetMatFunction} that $a \in \ideal_{\genrs_0,n}$ if and only if $f_A^{(r)}(\genrs_0)= 0$ for all $A \in \basis$ and all $0 \leq r < n$.
		
		From the fact that $f_A(\genrs_0)= 0$ for all $A \in \basis$, it necessarily follows that there exists $B \in \basis$ such that $f_B$ is non-constant. To see this, note that if $f_A$ were constant for every $A \in \basis$, then there would necessarily exist $B \in \basis$ such that $f_B \not\equiv 0$ (otherwise we would contradict the assumption that $a \neq 0$). But if $f_B$ is constant non-zero, then $f_B(\genrs_0) \neq 0$, which would contradict the assumption that $a \in \ideal_{\genrs_0,n}$.
		
		Finally, since each $f_A$ is holomorphic on $\rs$, each $f_A$ is analytic at $\genrs_0$. But $f_B$ is a non-constant function with $f_B^{(r)}(\genrs_0) = 0$ for all $0 \leq r < n$. Thus, there must exist an integer $N > n$ such that $f_B^{(N-1)}(\genrs_0) \neq 0$. Hence, $a \not\in \ideal_{\genrs_0,N}$ and so $\ideal_{\genrs_0,N} \neq \ideal_{\genrs_0,n}$, as required.
	\end{proof}
	
	\begin{prop} \label{ProperChain}
		Let $\mfA=\generalALA$ be an automorphic Lie algebra. Then for any $\genrs_0 \in \rs$, there exist strictly positive integers $m_1< m_2< m_3<\ldots$ such that we have a strictly descending chain of ideals
		\begin{equation*}
			\mfA \supset \ideal_{\genrs_0,m_1} \supset \ideal_{\genrs_0,m_2} \supset \ideal_{\genrs_0,m_3} \supset \ldots
		\end{equation*}
		of $\mfA$.
	\end{prop}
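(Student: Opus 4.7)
The plan is to construct the sequence $m_1<m_2<m_3<\ldots$ recursively, using Lemma~\ref{ProperBase} to guarantee each strict drop and Lemma~\ref{IdealChain} to guarantee that the drops occur inside an already weakly descending chain.

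First I would set $m_1=1$, noting that $\ideal_{\genrs_0,1}\neq \mfA$ by Lemma~\ref{lem:IdealsNonTrivial}. Then, given $m_k$, I would invoke Lemma~\ref{ProperBase} with $n=m_k$ to obtain an integer $N>m_k$ such that $\ideal_{\genrs_0,N}\neq \ideal_{\genrs_0,m_k}$, and I would set $m_{k+1}:=N$. The weakly descending chain from Lemma~\ref{IdealChain} gives $\ideal_{\genrs_0,m_{k+1}}\subseteq \ideal_{\genrs_0,m_k}$, so combined with inequality this containment is strict.

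Iterating this construction produces the required strictly descending chain
\begin{equation*}
\mfA\supset \ideal_{\genrs_0,m_1}\supset \ideal_{\genrs_0,m_2}\supset \ideal_{\genrs_0,m_3}\supset\cdots
\end{equation*}
of ideals. There is no real obstacle here since the substantive analytic content (the existence, for each $n$, of a higher index $N$ where the ideal properly shrinks, which rests on the fact that a holomorphic function vanishing to order $n$ at $\genrs_0$ but being non-constant must have some higher-order Taylor coefficient that is nonzero) has already been discharged in the proof of Lemma~\ref{ProperBase}. The present proposition is essentially the packaging of that lemma into an infinite recursive construction.
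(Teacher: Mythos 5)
Your proposal is correct and matches the paper's own argument, which likewise cites Lemmata~\ref{IdealChain}, \ref{lem:IdealsNonTrivial} and~\ref{ProperBase} and leaves the recursive construction implicit; you have simply spelled out the recursion explicitly.
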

	\begin{proof}
		This follows immediately from Lemmata~\ref{IdealChain}, \ref{lem:IdealsNonTrivial} and \ref{ProperBase}.
	\end{proof}
	
		\section{The Wildness of Automorphic Lie Algebras}
	\label{sec:wildness}
	In this section, we show that automorphic Lie algebras are wild, and hence, the problem of classifying the indecomposable representations may be considered hopeless. The wildness immediately follows from the representation theory of $\OX$. Interestingly, the wildness of an automorphic Lie algebra $\mfA$ can arise from the ideals and quotients of $\mfA$ that exist as a result of certain representations of $\OX$ given by nilpotent matrices. For example, these exist whenever we have representations of $\OX$ given by Jordan blocks, as is the case for many commutative algebras (including those that are tame).
The results of this section also hold for twisted/untwisted loop algebras and should be easily adaptable to equivariant map algebras.
	
	\begin{prop} \label{SolvableIdeals}
		Let $\mfA = \generalALA$ be an automorphic Lie algebra, let $\genrs_0 \in \mbX$ and let $\jetrep{\genrs_0}{1} \in \fin \OX$ be as defined in Section~\ref{sec:RepsFromJets}. Let $m$ be the least integer such that $\ideal_{\genrs_0,m} \neq \ideal_{\genrs_0,1}$. Then for any $n \geq m$, the quotient $\mfA / \ideal_{\genrs_0,n}$ is a finite-dimensional Lie algebra with a non-zero solvable ideal. Moreover, for any $N>0$, there exists an integer $n \geq m$ such that $\mfA / \ideal_{\genrs_0,n}$ has a solvable ideal $\mathfrak{S}$ with $\dim \mathfrak{S}>N$.
	\end{prop}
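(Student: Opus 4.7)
The plan is to realise $\mfA/\ideal_{\genrs_0,n}$ via Proposition~\ref{prop:IdealsFromReps} as $\im(\id_\lieg \otimesC \jetrep{\genrs_0}{n})$. By Remark~\ref{JetMatFunction}, each $\jetrep{\genrs_0}{n}(f)$ is upper triangular with $f(\genrs_0)$ along the diagonal, so $\im \jetrep{\genrs_0}{n}$ is a commutative associative subalgebra of $\End_\mathbb{C}(V_{0,n})$ and the Lie bracket on the quotient, as defined in the proof of Proposition~\ref{prop:IdealsFromReps}, reads $[A\otimes M,\, B\otimes M'] = [A,B]_\lieg \otimes MM'$. The quotient is finite-dimensional because it is a subspace of the finite-dimensional space $\lieg \otimesC \End_{\mathbb{C}}(V_{0,n})$.

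The candidate ideal is $\mathfrak{S} := \ideal_{\genrs_0,1}/\ideal_{\genrs_0,n}$, which is well-defined as an ideal of the quotient because $\ideal_{\genrs_0,n} \subseteq \ideal_{\genrs_0,1}$ by Lemma~\ref{IdealChain}. For $n \geq m$, the minimality of $m$ together with Lemma~\ref{IdealChain} gives $\ideal_{\genrs_0,n} \subseteq \ideal_{\genrs_0,m} \subsetneq \ideal_{\genrs_0,1}$, so $\mathfrak{S} \neq 0$. For solvability, take any $a = \sum_{A \in \basis} A \otimes f_A \in \ideal_{\genrs_0,1}$. Since $\jetrep{\genrs_0}{1}$ is point-evaluation at $\genrs_0$ and $\basis$ is a basis of $\lieg$, being in $\ideal_{\genrs_0,1}$ forces $f_A(\genrs_0) = 0$ for every $A \in \basis$; by Remark~\ref{JetMatFunction} each $\jetrep{\genrs_0}{n}(f_A)$ is then strictly upper triangular. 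Letting $N_k \subseteq \End_\mathbb{C}(V_{0,n})$ denote the subspace of matrices supported on the $k$-th and higher superdiagonals, we conclude $\mathfrak{S} \subseteq \lieg \otimes N_1$. Since $N_j \cdot N_k \subseteq N_{j+k}$ and the bracket acts by multiplication on the matrix tensor factor, an easy induction gives $\mathfrak{S}^{(i)} \subseteq \lieg \otimes N_{2^i}$ for the derived series, which vanishes once $2^i \geq n$.

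For the \emph{moreover}, Proposition~\ref{ProperChain} produces a strictly descending subsequence of $(\ideal_{\genrs_0,k})_{k \geq 1}$, so $\dim \mathfrak{S} = \dim(\ideal_{\genrs_0,1}/\ideal_{\genrs_0,n})$ is unbounded as $n$ runs through that subsequence; choose $n \geq m$ along it with $\dim \mathfrak{S} > N$. The main technical point is verifying that the potentially non-abelian action of $\lieg$ does not disrupt the nilpotency: it does not, because the derived-series calculation only uses the commutative associative multiplication on the matrix side, which is exactly the factor constrained by the superdiagonal filtration.
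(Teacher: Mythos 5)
Your proof is correct and follows essentially the same strategy as the paper: take $\mathfrak{S}=\ideal_{\genrs_0,1}/\ideal_{\genrs_0,n}$, observe from Remark~\ref{JetMatFunction} that elements of $\ideal_{\genrs_0,1}$ map under $\id_{\lieg}\otimesC\jetrep{\genrs_0}{n}$ into $\lieg$ tensored with strictly upper triangular matrices, deduce solvability from nilpotency of the matrix factor, and use Proposition~\ref{ProperChain} to make $\dim\mathfrak{S}$ unbounded. The one place you do something cleaner is the solvability step: the paper first notes (using minimality of $m$) that $\ideal_{\genrs_0,1}/\ideal_{\genrs_0,m}$ is \emph{abelian} because the only nonzero entry is in position $(1,m)$, and then asserts for general $n$ that solvability is ``easy to show'' since the matrices are nilpotent; your argument makes that assertion precise and uniform in $n$ by introducing the superdiagonal filtration $N_k$ with $N_j N_k\subseteq N_{j+k}$ and running the induction $\mathfrak{S}^{(i)}\subseteq\lieg\otimes N_{2^i}$, which terminates once $2^i\ge n$. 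This gives a direct proof for every $n\ge m$ without the case split, and in fact shows $\mathfrak{S}$ is nilpotent, not merely solvable.
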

	\begin{proof}
		By Lemma~\ref{IdealChain} and the third isomorphism theorem, it follows that $\mathfrak{K}=\ideal_{\genrs_0,1} /\ideal_{\genrs_0,m}$ is a non-trivial ideal of $\mfA / \ideal_{\genrs_0,m}$. Let $\basis$ be a basis for $\lieg$ and let
		\begin{equation*}
			\sum_{A \in \basis} A \otimes f_A \in \mathfrak{K}.
		\end{equation*}
		The unique eigenvalue of $\jetrep{\genrs_0}{1}(f_A)$ is trivially $0$ for any $A$, so by Remark~\ref{JetMatFunction}, the unique eigenvalue of the matrix $\jetrep{\genrs_0}{m}(f_A)$ is also $0$ for any $A$. Moreover, $\jetrep{\genrs_0}{m}(f_A)$ is a banded upper triangular matrix. So $\jetrep{\genrs_0}{m}(f_A)$ is nilpotent for any $A \in \basis$. Since $m$ is minimal, the only non-zero entry of $\jetrep{\genrs_0}{m}(f_A)$ is the $(1,m)$-th entry, and we have $\jetrep{\genrs_0}{m}(f_A)\jetrep{\genrs_0}{m}(g_A)=0$ for all $A$ for any
		\begin{equation*}
			a=\sum_{A \in \basis} A \otimes f_A, b=\sum_{B \in \basis} B \otimes g_B \in \mathfrak{K}.
		\end{equation*}
		Thus, $(\id_{\lieg} \otimesC \jetrep{\genrs_0}{m})([a,b])=0$. So $\mathfrak{K}$ is abelian and hence solvable.
		
		By Proposition~\ref{ProperChain}, we may choose another integer $m'>m$ such that $\ideal_{\genrs_0,m'} \neq \ideal_{\genrs_0,m}$. Let $m'$ be minimal. Then a similar argument to the above, shows that $\mathfrak{K}'=\ideal_{\genrs_0,m} /\ideal_{\genrs_0,m'}$ is a non-trivial abelian ideal of $\mfA / \ideal_{\genrs_0,m'}$. In fact, it is easy to show that $\ideal_{\genrs_0,1} /\ideal_{\genrs_0,m'}$ is solvable, since every matrix induced by the representation $\jetrep{\genrs_0}{m'}$ is nilpotent. Proceeding iteratively, it follows that one can construct a quotient of $\mfA$ that contains a solvable ideal of an arbitrarily large dimension. Thus, for any $N>0$, there exists an integer $n \geq m$ such that $\mfA / \ideal_{\genrs_0,n}$ has a solvable ideal $\mathfrak{S}$ with $\dim \mathfrak{S}>N$.
	\end{proof}
	
	\begin{thm}
\label{thm:wildness}
		Automorphic Lie algebras are wild.
	\end{thm}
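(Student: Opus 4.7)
The plan is to reduce the statement to Makedonski\u{\i}'s classification (Theorem~\ref{MakedonskiiThm}) of wild finite-dimensional Lie algebras by exhibiting a finite-dimensional quotient of $\mfA$ that is forced to be wild. Recall from the preliminaries that wildness of $\mfA$ follows from wildness of some quotient $\mfA/\ideal$, so it suffices to locate a single ideal whose quotient falls outside the three tame classes: semisimple, one-dimensional, or a direct sum of a semisimple Lie algebra and a one-dimensional one.

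First, I would pick any point $\genrs_0 \in \rs$ and apply Proposition~\ref{SolvableIdeals} with, say, $N=1$. This gives an integer $n$ such that the quotient $\mfA / \ideal_{\genrs_0,n}$ is finite-dimensional and contains a solvable ideal $\mathfrak{S}$ with $\dim \mathfrak{S} > 1$. In particular, the solvable radical of $\mfA / \ideal_{\genrs_0,n}$ contains $\mathfrak{S}$ and therefore has dimension at least two.

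Next, I would check each of the three tame possibilities in Theorem~\ref{MakedonskiiThm} against this quotient. A semisimple Lie algebra has trivial radical, so the presence of a non-trivial solvable ideal rules this out. A one-dimensional Lie algebra is ruled out since $\dim \mathfrak{S} > 1$ forces $\dim (\mfA / \ideal_{\genrs_0,n}) > 1$. Finally, a direct sum of a semisimple Lie algebra and a one-dimensional Lie algebra has radical of dimension exactly one, contradicting the fact that the radical here has dimension at least two. Makedonski\u{\i}'s theorem therefore forces $\mfA / \ideal_{\genrs_0,n}$ to be wild.

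Since wildness of a quotient implies wildness of $\mfA$ (a wild subcategory of $\fin (\mfA / \ideal_{\genrs_0,n})$ pulls back through the surjection $\mfA \twoheadrightarrow \mfA / \ideal_{\genrs_0,n}$ to a wild subcategory of $\fin \mfA$), this completes the argument. The whole proof is essentially a one-line deduction once Proposition~\ref{SolvableIdeals} is in hand; the conceptual work, and the only step where anything could go wrong, was already carried out in the previous section when building the solvable quotients from jet representations, so I do not anticipate a genuine obstacle here.
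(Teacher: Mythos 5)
Your proof is correct and takes essentially the same route as the paper's: both apply Proposition~\ref{SolvableIdeals} to obtain a finite-dimensional quotient with a solvable ideal of dimension at least two, and then invoke Makedonski\u{\i}'s theorem (Theorem~\ref{MakedonskiiThm}) after noting that such a quotient cannot lie in any of the three tame classes. You spell out the case analysis against Theorem~\ref{MakedonskiiThm} slightly more explicitly than the paper does, but the argument is the same.
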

	\begin{proof}
		Let $\mfA$ be an automorphic Lie algebra. By Proposition~\ref{SolvableIdeals}, for any $N>1$, there exists a finite-dimensional quotient $\mathfrak{Q}$ of $\mfA$ such that $\mathfrak{Q}$ contains a solvable ideal $\mathfrak{S}$ with $\dim \mathfrak{S}>N$. Thus, $\mathfrak{Q}$ is neither semisimple nor a 1-dimensional extension of a semisimple Lie algebra. By Makedonski\u{\i}'s Theorem (Theorem~\ref{MakedonskiiThm}), $\mathfrak{Q}$ is wild. Thus, $\mfA$ is wild.
	\end{proof}
	
\newcommand{\mev}[2]{\mathrm{ev}_{\phi_{#1,#2}}}
\newcommand{\omev}[2]{\id\otimes_{\mb{C}}\phi_{#1,#2}}

\newcommand {\rd}{\mathrm d}

\newcommand{\cp}{\overline{\mb{C}}}
\newcommand{\zn}[1]{\mb{Z}/#1\mb{Z}}
\newcommand{\ord}[2]{\text{ord}_{#1}\!\left(#2\right)}

\newcommand{\roots}{\Phi}
\newcommand{\sroots}{\Delta}

\newcommand{\SL}{\mathrm{SL}}
\newcommand{\Sp}{\mathrm{Sp}}
\newcommand{\PSL}{\mathrm{PSL}}
\newcommand{\Int}[1]{\mathrm{Int}\!\left(#1 \right)}
\newcommand{\Ad}{\mathrm{Ad}}
\newcommand{\res}{\mathrm{res}\,}

\newcommand{\dynkinscale}{0.7}
\newcommand{\dynkintablescale}{0.5}
\newcommand{\dynkinfont}{\footnotesize}
\tikzstyle{dynkinnode}=[draw, color=blue, shape=circle,minimum size=3.5 pt,inner sep=0]
\tikzstyle{ldynkinnode}=[draw, color=blue, shape=circle,minimum size=3.5 pt,inner sep=0]
\tikzstyle{sdynkinnode}=[draw, color=red, shape=circle,minimum size=3.5 pt,inner sep=0]
\tikzstyle{mdynkinnode}=[draw, color=magenta, shape=circle,minimum size=3.5 pt,inner sep=0]
\tikzstyle{fdynkinnode}=[draw, color=blue, shape=circle,minimum size=3.5 pt,inner sep=0,fill=black]
\tikzstyle{fldynkinnode}=[draw, color=blue, shape=circle,minimum size=3.5 pt,inner sep=0,fill=black]
\tikzstyle{fsdynkinnode}=[draw, color=red, shape=circle,minimum size=3.5 pt,inner sep=0,fill=black]
\tikzstyle{fmdynkinnode}=[draw, color=magenta, shape=circle,minimum size=3.5 pt,inner sep=0,fill=black]
\tikzstyle{brace}=[ decorate, decoration={brace, amplitude=5pt}]
\tikzstyle{mbrace}=[decorate, decoration={brace, amplitude=5pt, mirror}]

\newcommand{\finitegroupfont}{\mathsf}
\newcommand{\cg}[1]{\finitegroupfont{C}_{#1}}
\newcommand{\dg}[1]{\finitegroupfont{D}_{#1}}
\newcommand{\tg}{\finitegroupfont{T}}
\newcommand{\og}{\finitegroupfont{O}}
\newcommand{\yg}{\finitegroupfont{Y}}

\section{Twisted Truncated Current Algebras as Quotients of Automorphic Lie Algebras}
\label{sec:closer} 
We continue to investigate the quotients $\mf{A}/\mf{I}_{\genrs_0,m}$ of an automorphic Lie algebra defined by the jet representations of Section \ref{sec:RepsFromJets}.
A careful choice of the local coordinate $z$ near $x_0$ together with the Riemann-Roch theorem allows us to prove that these quotients are isomorphic to twisted truncated current algebras.

The careful choice of $z$ is the one that linearises the action of $\rg$ locally in the following sense. Throughout, we fix a point $\genrs_0\in\rs$, which defines a stabiliser subgroup $\rg_{\genrs_0}$ generated by an element $\gamma_0\in\rg$ of order $\nu_0\in\mb{N}$. We then fix a coordinate $z$ on $\rs$ in a neighbourhood of $\genrs_0$ and a root of unity $\zeta$ such that
\begin{align}
\label{eq:z}&z(\genrs_0)=0,\quad \gamma_0\cdot z(\genrs)= z(\gamma_0^{-1}\genrs)=\zeta^{-1} z(\genrs)
\end{align} 
for all $\genrs$ in the domain of $z$.
A proof of the fact that $\rg_{\genrs_0}$ is cyclic (and finite), and of the existence
of the coordinate $z$, can be found in \cite{miranda1995algebraic}\footnote{In the genus zero case we can construct the coordinate as follows. An automorphism $\gamma_0=[g]\in\PSL(2,\mb{C})$ fixes a point $[\genrs_0:1]$ if and only if $(\genrs_0,1)^t$ is a right eigenvector of $g\in\SL(2,\mb{C})$. This is the case if and only if $(1,-\genrs_0)$ is a left eigenvector of $g^{-1}$ (using that $\SL(2,\mb{C})=\Sp(2,\mb{C})$ with respect to the standard symplectic form). Because $g$ has finite order, there is a left eigenvector $(a,b)$ of $g^{-1}$ independent of $(1,-\genrs_0)$. The coordinate $z(\genrs)=\frac{\genrs-\genrs_0}{a\genrs+b}$ satisfies the two conditions.}.

The Lie structure will not be used until the end of the section, so we will replace $\mf{g}$ with a complex finite dimensional vector space $V$, and write the chain of subspaces
\begin{equation}
\label{eq:chain of subspaces}
(V\otimes_{\mb{C}}\mc{O}_\rs)^\rg=I_{\genrs_0,0}\supset I_{\genrs_0,1}\supset I_{\genrs_0,2}\supset I_{\genrs_0,3}\ldots
\end{equation}
where $I_{\genrs_0,m}$ consists precisely of those vectors in $(V\otimes_{\mb{C}}\mc{O}_\rs)^\rg$ with derivatives of order $0,1,\ldots,m-1$ vanishing at $\genrs_0$. 

We denote the eigenspace decomposition of $V$ with respect to $\Gamma_{\genrs_0}$ by 
\[V=\bigoplus_{\bar{m}\in\zn{\nu_0}}V_{\bar{m}},\quad V_{\bar{m}}=\{v\in V\,|\,\gamma_0 v =\zeta^m v\}\]
where $V$ is a finite dimensional $\rg$-module.

For the study of the quotients $I_{\genrs_0,m}/I_{\genrs_0,m+1}$ it is useful to notice that $I_{\genrs_0,m+1}$ is the kernel of the restriction of $\vjet{\genrs_0}{m}$
 to $I_{\genrs_0,m}$. Thus the first isomorphism theorem gives
\begin{equation}
\label{eq:inclusions}
I_{\genrs_0,m}/I_{\genrs_0,m+1}\cong \vjet{\genrs_0}{m}(I_{\genrs_0,m})\subset V_{\bar{m}}\otimes z^m+(z^{m+1})
\end{equation}
and the last inclusion follows from invariance under $\rg_{\genrs_0}$ and the transformation (\ref{eq:z}) of $z$. We will see that the inclusion is in fact an equality. To this end we will use the following application of the Riemann-Roch theorem. 
\begin{lem}[holomorphic interpolation]\label{lem:interpolation}
Let $Z=\{z_1,\ldots,z_d\}$ be a subset of $\rs$ and $\{c_1,\ldots,c_d\}$ a subset of $\mb{C}$. For any integer $m\ge 0$ there exists $f\in \mc{O}_\rs$ such that $f^{(m)}(z_i)=c_i$ and $f^{(j)}(z_i)=0$ if $j<m$.
\end{lem}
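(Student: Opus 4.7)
The plan is to recast the interpolation conditions as conditions defining a Riemann--Roch space on the compact surface $\srs$, and to deduce the existence of the required $f$ by comparing the dimensions of two such spaces. First I would fix a local coordinate at each $z_i$, so that ``$f^{(m)}(z_i)$'' becomes a well-defined complex scalar once attention is restricted to functions vanishing to order at least $m$ at $z_i$. Setting $\Sigma = z_1 + \cdots + z_d$ (as a divisor on $\srs$) and picking any point $P \in \poles$, every element of the Riemann--Roch space $L(nP - m\Sigma)$ automatically lies in $\mc{O}_\rs$---its poles are confined to $P \in \poles$---and vanishes to order at least $m$ at each $z_i$.

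Next I would introduce the linear evaluation map
\begin{equation*}
\Phi \colon L(nP - m\Sigma) \longrightarrow \mathbb{C}^d, \qquad f \longmapsto \bigl(f^{(m)}(z_1), \ldots, f^{(m)}(z_d)\bigr),
\end{equation*}
whose kernel is exactly $L(nP - (m+1)\Sigma)$, since the vanishing of $f^{(m)}(z_i)$ together with the vanishing of all lower derivatives forces $f$ to vanish to order $\geq m+1$ at $z_i$. This gives the short exact sequence
\begin{equation*}
0 \longrightarrow L(nP - (m+1)\Sigma) \longrightarrow L(nP - m\Sigma) \xrightarrow{\ \Phi\ } \im(\Phi) \longrightarrow 0,
\end{equation*}
and the lemma then reduces to showing that $\Phi$ is surjective for some $n$. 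To finish, I would choose $n > 2g - 2 + (m+1)d$, where $g$ is the genus of $\srs$, so that both divisors in the sequence have degree exceeding $2g - 2$; Riemann--Roch then yields
\begin{equation*}
\dim L(nP - m\Sigma) - \dim L(nP - (m+1)\Sigma) = \bigl(n - md - g + 1\bigr) - \bigl(n - (m+1)d - g + 1\bigr) = d,
\end{equation*}
forcing $\im(\Phi) = \mathbb{C}^d$, and any preimage of $(c_1, \ldots, c_d)$ supplies the required $f$.

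The only points to watch are that $\Phi$ truly lands in $\mathbb{C}^d$ (the $m$-th derivative is coordinate-dependent but well-defined as a scalar once the local coordinates at the $z_i$ are fixed once and for all) and that both Serre-duality terms in Riemann--Roch vanish under the chosen lower bound on $n$. I do not expect a serious obstacle: once the problem is framed in terms of the two Riemann--Roch spaces $L(nP - m\Sigma)$ and $L(nP - (m+1)\Sigma)$, the dimension count is essentially automatic, and the only genuinely non-trivial ingredient is the Riemann--Roch theorem itself, which is explicitly invoked as allowed by the hint preceding the lemma.
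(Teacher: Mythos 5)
Your proof is correct and rests on the same engine as the paper's: kill the $H^1$ term in Riemann--Roch by allowing a sufficiently high-order pole in $\poles$, then compare the dimensions of two spaces $L(D)$ differing only in the vanishing order imposed at the interpolation points. The route is, however, organised differently. The paper works one point at a time: it compares $L(nD_\poles - mD_Z)$ with a divisor that relaxes the vanishing order at just $z_1$, obtains a one-dimensional jump, hence a function $f_1$ controlling the derivative at $z_1$ only, and then repeats and sums over $k = 1, \ldots, d$. You instead compare $L(nP - m\Sigma)$ and $L(nP - (m+1)\Sigma)$ in a single stroke, recognise the latter as the kernel of the evaluation map $\Phi \colon f \mapsto (f^{(m)}(z_i))_i$, and conclude surjectivity of $\Phi$ onto $\mathbb{C}^d$ from the dimension jump of $d$. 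This buys you two simplifications: all $d$ points are handled simultaneously (no final summation step), and a single pole $P \in \poles$ suffices in place of the full divisor $D_\poles$. The kernel identification $\ker \Phi = L(nP - (m+1)\Sigma)$ also makes the degree bookkeeping very transparent, and your remark that $\Phi$ is only coordinate-dependent up to a nonzero scalar (so well defined after fixing local coordinates once) correctly addresses the one subtlety in this phrasing. Both arguments are sound; yours is the tidier packaging.
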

\begin{proof}
\renewcommand{\div}[1]{\text{div}\!\left(#1\right)}
\newcommand{\mero}[1][\cp]{\mathcal{M}(#1)}

For a divisor $D$ on a Riemann surface $\srs$ one defines a vector space $L(D)$ of meromorphic functions on $\srs$ by
\[L(D)=\{f\text{ meromorphic on }\srs\;|\; \div{f}+D\ge0\}.\]
If a point $p\in\srs$ has a positive coefficient $n$ in $D$ then $f\in L(D)$ is allowed a pole at $p$ of order at most $n$. If a point $p\in\srs$ has a negative coefficient $-n$ in $D$ then $f\in L(D)$ has a zero at $p$ of order at least $n$. If $f$ is any meromorphic function on a compact Riemann surface then $\deg(\div{f})=0$. Consequently $L(D)=\{0\}$ if $\deg(D)<0$ and $\srs$ compact.

The Riemann-Roch theorem for compact Riemann surfaces states
\begin{equation}\label{eq:RR}
\dim L(D)-\dim L(K-D)=\deg(D)+1-g.
\end{equation}
Here $K$ is a canonical divisor: a divisor of a $1$-form on $\srs$, and $g$ is the genus of the Riemann surface. 

With the notation $D_S=\sum_{s\in S}s$ we define the divisors
\begin{align*}
D^m_n&=nD_\poles-m D_Z,
\\\tilde{D}^m_n&=nD_\poles-m D_{Z\setminus\{z_1\}}-(m-1)z_1.
\end{align*}
Notice that  \[L({D}^m_n)\subset L(\tilde{D}^m_n)\subset\mc{O}_\rs\] for all $m\in\mb{Z}_{>0}$ and $n\in\mb{Z}$. 

Because the order of poles is irrelevant for our current purposes, we can choose $n\in \mb{N}$ large enough so that $\deg(K-D^m_n)<0$ and consequently $\dim L(K-D^m_n)=0$. Doing the same for $\tilde{D}^m_n$ reduces the Riemann-Roch equation (\ref{eq:RR}) to 
\begin{align*}
\dim L({D}^m_n)&=n|\poles|-m|Z|+1-g
\\\dim L(\tilde{D}^m_n)&=n|\poles|-m(|Z|-1)-(m-1)+1-g
\end{align*}
and $\dim L(\tilde{D}^m_n)-\dim L({D}^m_n)=1$. This shows that there exists a function $f_1\in \mc{O}_\rs$ such that $f_1^{(j)}(z_i)=0$ if $j<m$ and $f_1^{(m)}(z_1)=c_1$ and $f_1^{(m)}(z_i)=0$ for $i\ne 1$.

The argument can be repeated with $z_1$ replaced by $z_k$ to obtain $f_k$ with the analogous properties. Then $f=\sum_k f_k$ satisfies the requirements of the lemma.
\end{proof}

Using holomorphic interpolation we can describe a basis for each of the factors of (\ref{eq:chain of subspaces}). We let $O(z^i)$ denote terms divisible by $z^i$.
\begin{thm}
\label{thm:composition factors}
Let $m\ge0$ and let $\{v^{\bar{m}}_1,\ldots,v^{\bar{m}}_{k_{\bar{m}}}\}$ 
be a basis of $V_{\bar{m}}$. 
Then there exists a set $\{a^{\bar{m}}_1,\ldots,a^{\bar{m}}_{k_{\bar{m}}}\}$ contained in $I_{\genrs_0,m}$ with Taylor expansion \[a^{\bar{m}}_i=v^{\bar{m}}_i\otimes z^m+O(z^{m+1})\]
such that  $\{a^{\bar{m}}_1+I_{\genrs_0,m+1},\ldots,a^{\bar{m}}_{k_{\bar{m}}}+I_{\genrs_0,m+1}\}$ is a basis of $I_{\genrs_0,m}/I_{\genrs_0,m+1}$.
\end{thm}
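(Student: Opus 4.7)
The strategy is to construct, for each basis vector $v^{\bar m}_i\in V_{\bar m}$, an explicit element $a^{\bar m}_i\in I_{\genrs_0,m}$ whose Taylor expansion at $\genrs_0$ equals $v^{\bar m}_i\otimes z^m+O(z^{m+1})$; once such elements exist, linear independence of the classes $a^{\bar m}_i+I_{\genrs_0,m+1}$ is immediate from the linear independence of the $v^{\bar m}_i$ in $V_{\bar m}$, and the inclusion (\ref{eq:inclusions}) forces the classes to span $I_{\genrs_0,m}/I_{\genrs_0,m+1}$. Thus the real content of the theorem is that (\ref{eq:inclusions}) is an equality, i.e.\ the map sending an element of $I_{\genrs_0,m}$ to the $z^m$-coefficient of its Taylor expansion is surjective onto $V_{\bar m}$.

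To exhibit $a^{\bar m}_i$, I fix a transversal $T$ of $\rg/\rg_{\genrs_0}$ with $\id\in T$, so that the orbit $\rg\genrs_0=\{t\genrs_0:t\in T\}$ is a finite subset of $\rs$. I then apply the interpolation lemma (Lemma~\ref{lem:interpolation}) to this orbit with the prescribed value $m!$ at $\genrs_0$ and $0$ at every other orbit point; this produces $f\in\OX$ whose Taylor expansion at $\genrs_0$ is $z^m+O(z^{m+1})$ and which vanishes to order at least $m+1$ at every other point of the orbit. The averaged element
\[
a^{\bar m}_i=\tfrac{1}{\nu_0}\sum_{\gamma\in\rg}(\gamma v^{\bar m}_i)\otimes(\gamma f)
\]
is then $\rg$-invariant by reindexing and lies in $V\otimes_{\mathbb{C}}\OX$.

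It remains to read off the leading Taylor coefficient of $a^{\bar m}_i$ at $\genrs_0$. Decomposing each $\gamma\in\rg$ as $\gamma=th$ with $t\in T$ and $h\in\rg_{\genrs_0}$: for $t\ne\id$ the point $(th)^{-1}\genrs_0$ lies in the orbit but differs from $\genrs_0$, so the Taylor series of $(th)f$ at $\genrs_0$ lies in $(z^{m+1})$ and contributes nothing to the coefficient of $z^m$. For $t=\id$ and $h=\gamma_0^k$, relation (\ref{eq:z}) gives $(hf)(\genrs)=f(\gamma_0^{-k}\genrs)=\zeta^{-mk}z^m+O(z^{m+1})$, while $hv^{\bar m}_i=\zeta^{mk}v^{\bar m}_i$; the two factors of $\zeta$ cancel, so the summand contributes $v^{\bar m}_i\otimes z^m+O(z^{m+1})$. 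Summing over the $\nu_0$ elements of $\rg_{\genrs_0}$ and dividing by $\nu_0$ yields $a^{\bar m}_i=v^{\bar m}_i\otimes z^m+O(z^{m+1})$, which in particular places $a^{\bar m}_i$ in $I_{\genrs_0,m}$. The main obstacle is the initial existence step — producing a holomorphic function on $\rs$ with prescribed jet behaviour simultaneously at every orbit point — and this is exactly what the Riemann--Roch interpolation Lemma~\ref{lem:interpolation} supplies; the group averaging is then a routine equivariance calculation controlled by the eigenvalue constraint defining $V_{\bar m}$.
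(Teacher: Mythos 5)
Your proof is correct, but it takes a genuinely different route to the paper's. The paper first reduces to the regular representation $V=\mathbb{C}\rg$, where the canonical form $F=\sum_{\gamma\in\rg}\gamma\otimes (f\circ\gamma^{-1})$ is automatically $\rg$-invariant, and then applies Lemma~\ref{lem:interpolation} with orbit-dependent values $c^\gamma=m!\,(\sigma(\gamma^{-1})'(\genrs_0))^{-m}\,c_\gamma$ chosen to force the leading jet to equal $v$; this requires a chain-rule computation to identify those values and a verification that $\gamma\mapsto c^\gamma$ is constant on the cosets $\rg/\rg_{\genrs_0}$. You instead work directly with a general $\rg$-module $V$ by averaging: you interpolate the simplest possible profile (nontrivial $m$-jet only at $\genrs_0$, vanishing $m$-jet at the rest of the orbit), then form $\tfrac{1}{\nu_0}\sum_{\gamma}(\gamma v)\otimes(\gamma f)$. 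Invariance is then free, the off-stabiliser terms vanish to order $m+1$, and the stabiliser terms combine because the $V_{\bar m}$-character $\zeta^{mk}$ exactly cancels the coordinate twist $\zeta^{-mk}$ coming from~(\ref{eq:z}). What each approach buys: the paper's construction produces the invariant element with \emph{no} averaging factor (useful if one wished to work over fields where $\nu_0$ might not be invertible, though here everything is over $\mathbb{C}$), at the cost of the reduction to $\mathbb{C}\rg$ and a more intricate choice of interpolation data; your version is shorter, applies directly to any $V$, and isolates the cancellation mechanism cleanly. Both correctly conclude by combining linear independence of the constructed leading coefficients with the dimension bound from the inclusion~(\ref{eq:inclusions}).
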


\begin{proof}
Suppose the statement holds for the regular representation $V=\mb{C}\rg$. Since $\rg$ is finite, $\mb{C}\rg$ decomposes into a direct sum containing any irreducible of $\rg$ as a summand.
Let $V_i$ be an irreducible $\rg$-module. Then the equivariant vectors with values in $V_i$ form a summand of the equivariant vectors with values in $\mb{C}\rg$, and therefore the statement also holds for each irreducible. But then it holds for any $\rg$-module $V$.

We will prove the statements for the regular representation $V=\mb{C}\rg$. 
Fix a vector \[v=\sum_{\gamma\in \rg} c_\gamma \gamma\in (\mb{C}\rg)_{\bar{m}}.\] This means that $\gamma_0\sum_{\gamma\in \rg} c_\gamma \gamma=\sum_{\gamma\in \rg} c_\gamma \gamma_0\gamma=\zeta^{m}\sum_{\gamma\in \rg} c_\gamma \gamma$ or that 
\begin{equation}
\label{eq:csubgamma}
c_{\gamma_0^{-1}\gamma}=\zeta^{m} c_\gamma.
\end{equation}
We will show existence of an element $F$ of $I_{\genrs_0,m}$ with $v$ as first coefficient in the Taylor expansion. That is 
\begin{equation}
\label{eq:F}
F=v\otimes_{\mb{C}}(\genrs-\genrs_0)^m+O((\genrs-\genrs_0)^{m+1})\in I_{\genrs_0,m}.
\end{equation}

From any function $f\in\mc{O}_\rs$ one can construct an invariant vector 
\[F =\sum_{\gamma\in\rg} \gamma\otimes f\circ \gamma^{-1}\in (\mb{C}\rg \otimes_\mb{C}\mc{O}_\rs)^\rg.\] To investigate its Taylor expansion about $\genrs_0$ we compute derivatives $(f\circ \gamma^{-1})^{(j)}(\genrs_0)$ for $j\le m$. It is now convenient to write the action of $\rg$ on $\rs$ in terms of a representation $\sigma:\rg\rightarrow\Aut(\rs)$. 
\begin{align*}
(f\circ \gamma^{-1})^{(j)}(\genrs_0)=&
(f^{(j)}\circ \gamma^{-1})(\genrs_0)(\sigma(\gamma^{-1})'(\genrs_0))^j
\\&+
\text{terms with factor }(f^{(j')}\circ \gamma^{-1})(\genrs_0)\text{ with }j'<j.
\end{align*}
This expression leads us to define a new set of constants 
\begin{equation}
\label{eq:csupgamma}
c^\gamma=m!\,(\sigma(\gamma^{-1})'(\genrs_0))^{-m} \,c_\gamma.
\end{equation}
We aim to interpolate the data $(\gamma^{-1}\genrs_0,c^\gamma)$ using Lemma \ref{lem:interpolation} to show existence of $f\in\mc{O}_\rs$ with 
\begin{equation}
\label{eq:interpolatingf}
f^{(m)}(\gamma^{-1}\genrs_0)=c^\gamma,\quad f^{(j)}(\gamma^{-1}\genrs_0)=0,\quad j<m.
\end{equation}
To do so we need to confirm that if $\gamma^{-1}\genrs_0=\tilde{\gamma}^{-1}\genrs_0$ then $c^\gamma=c^{\tilde{\gamma}}$. But $\gamma^{-1}\genrs_0=\tilde{\gamma}^{-1}\genrs_0$ if and only if $\tilde{\gamma}\gamma^{-1}\in\rg_{\genrs_0}=\langle\gamma_0\rangle$ so it is sufficient to show that $\gamma\rightarrow c^\gamma$ is constant on the classes $\rg/\rg_{\genrs_0}$. We compute
\begin{align*}
c^{\gamma_0^{-1}\gamma}
&=m!\,(\sigma((\gamma_0^{-1}\gamma)^{-1})'(\genrs_0))^{-m} \,c_{\gamma_0^{-1}\gamma}
\\&=m!\,((\sigma(\gamma^{-1})\sigma(\gamma_0))'(\genrs_0))^{-m} \,c_{\gamma_0^{-1}\gamma}
\\&=m!\,(\sigma(\gamma^{-1})'(\sigma(\gamma_0)(\genrs_0))\sigma(\gamma_0)'(\genrs_0))^{-m} \,c_{\gamma_0^{-1}\gamma}
\\&=m!\,(\sigma(\gamma^{-1})'(\genrs_0)\zeta)^{-m} \,\zeta^{m} c_{\gamma}=c^\gamma
\end{align*}
using (\ref{eq:z}) and (\ref{eq:csubgamma}) in the last line.
We conclude that $f$ exists. Now $F=\sum_{\gamma\in\rg} \gamma\otimes f\circ \gamma^{-1}$ satisfies (\ref{eq:F}).

We have shown that $I_{\genrs_0,m}/I_{\genrs_0,m+1}$ contains a subset $B=\{a^{\bar{m}}_1+I_{\genrs_0,m+1},\ldots,a^{\bar{m}}_{k_{\bar{m}}}+I_{\genrs_0,m+1}\}$ of elements whose set of first Taylor coefficients corresponds to any choice of basis of $V_{\bar{m}}$. 
Elements of $B$ are linearly independent, since their first Taylor coefficients are. Moreover, from (\ref{eq:inclusions}) we know that the dimension of $I_{\genrs_0,m}/I_{\genrs_0,m+1}$ is at most $k_{\bar{m}}$. Hence $B$ is a basis.
\end{proof}

Theorem \ref{thm:composition factors} and Gaussian elimination results in the following reformulation.
\begin{cor}
$(V\otimes_{\mb{C}}\mc{O}_{\rs})^\rg/{I}_{\genrs_0,m}\cong \left(V\otimes\mb{C}[z]/(z^m)\right)^{\gamma_0}\cong \left(V\otimes\mb{C}[z]\right)^{\gamma_0}/(z^m).$
\end{cor}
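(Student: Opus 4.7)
The plan is to use the jet map as the bridge between the two sides and to combine Theorem \ref{thm:composition factors} with a straightforward graded/dimension comparison. First, I would consider the truncated jet map
\[
\vjet{\genrs_0}{m-1}\colon (V\otimes_{\mb{C}}\mc{O}_{\rs})^\rg \longrightarrow V\otimes_{\mb{C}}\mb{C}[z]/(z^m),
\]
sending an invariant section to its Taylor expansion at $\genrs_0$ truncated modulo $z^m$. By the very definition of the filtration in (\ref{eq:chain of subspaces}), the kernel of this map is exactly $I_{\genrs_0,m}$, so the first isomorphism theorem gives an injection of $(V\otimes_{\mb{C}}\mc{O}_{\rs})^\rg/I_{\genrs_0,m}$ into $V\otimes_{\mb{C}}\mb{C}[z]/(z^m)$. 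Moreover, by the transformation rule (\ref{eq:z}) and $\gamma_0$-invariance, this image actually lies inside $(V\otimes\mb{C}[z]/(z^m))^{\gamma_0}$; explicitly, an element $\sum_{j<m} v_j\otimes z^j$ is $\gamma_0$-invariant if and only if $v_j\in V_{\bar{j}}$ for each $j$, so
\[
(V\otimes\mb{C}[z]/(z^m))^{\gamma_0}=\bigoplus_{j=0}^{m-1} V_{\bar{j}}\otimes z^j.
\]

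Next, I would prove surjectivity. By Theorem \ref{thm:composition factors}, for each $j=0,1,\ldots,m-1$ and each basis element $v^{\bar{j}}_i$ of $V_{\bar{j}}$, there is an element $a^{\bar{j}}_i\in I_{\genrs_0,j}$ with leading Taylor coefficient $v^{\bar{j}}_i\otimes z^j$, and these leading coefficients span the graded piece $I_{\genrs_0,j}/I_{\genrs_0,j+1}$. The images $\vjet{\genrs_0}{m-1}(a^{\bar{j}}_i)$ are thus upper-triangular with respect to the $z$-degree filtration on $(V\otimes\mb{C}[z]/(z^m))^{\gamma_0}$ and invertible on each diagonal block. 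A Gaussian elimination in the filtered basis $\{a^{\bar{j}}_i\}_{j,i}$, subtracting suitable higher-degree correction terms, produces a modified family $\{\tilde a^{\bar{j}}_i\}$ whose jets are precisely the pure monomials $v^{\bar{j}}_i\otimes z^j$. This exhibits a basis of $(V\otimes\mb{C}[z]/(z^m))^{\gamma_0}$ in the image of the jet map, proving surjectivity and hence the first isomorphism. Equivalently, one may simply note that by Theorem \ref{thm:composition factors} the left-hand side has dimension $\sum_{j=0}^{m-1}\dim V_{\bar{j}}$, which matches the dimension of the right-hand side, and combine this with the injection established above.

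For the second isomorphism, the argument is purely formal: the eigenspace decomposition of $V$ under $\gamma_0$ yields
\[
(V\otimes\mb{C}[z])^{\gamma_0}=\bigoplus_{j\ge 0} V_{\bar{j}}\otimes z^j,
\]
and the subspace $(z^m)\cap (V\otimes\mb{C}[z])^{\gamma_0}$ is precisely $\bigoplus_{j\ge m} V_{\bar{j}}\otimes z^j$. The quotient is then $\bigoplus_{j=0}^{m-1} V_{\bar{j}}\otimes z^j$, which is exactly $(V\otimes\mb{C}[z]/(z^m))^{\gamma_0}$. I expect the only substantive step to be surjectivity in the first isomorphism; everything else is bookkeeping. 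The Gaussian elimination is cosmetic and can in fact be avoided once the dimension count is in hand, so the real content is entirely packaged in Theorem \ref{thm:composition factors}.
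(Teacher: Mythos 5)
Your proposal is correct and follows exactly the route the paper sketches in its one-line proof: the restricted jet map $\vjet{\genrs_0}{m-1}$ has kernel $I_{\genrs_0,m}$ and image inside $(V\otimes\mb{C}[z]/(z^m))^{\gamma_0}=\bigoplus_{j=0}^{m-1}V_{\bar j}\otimes z^j$, and Theorem \ref{thm:composition factors} plus Gaussian elimination (or the dimension count you note) gives surjectivity, while the second isomorphism is the formal identification of $(z^m)\cap(V\otimes\mb{C}[z])^{\gamma_0}$ with $\bigoplus_{j\ge m}V_{\bar j}\otimes z^j$. You have simply spelled out what the paper compresses into the phrase ``Theorem \ref{thm:composition factors} and Gaussian elimination.''
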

We reintroduce a complex finite dimensional Lie algebra $\mf{g}$ in place of $V$ and use Proposition \ref{prop:IdealsFromReps} to see that the linear isomorphism becomes an isomorphism of Lie algebras. Thus we get
\begin{cor}
\label{cor:twisted truncated current algebra} The quotient of an automorphic Lie algebra $\mf{A}$ on a punctured compact Riemann surface $\rs$ by the ideal $\mf{I}_{\genrs_0,m}$ has the form
\[\mf{A}/\mf{I}_{\genrs_0,m}\cong \left(\mf{g}\otimes\mb{C}[z]/(z^m)\right)^{\gamma_0}\cong \left(\mf{g}\otimes\mb{C}[z]\right)^{\gamma_0}/(z^m).\]
\end{cor}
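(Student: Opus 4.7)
The plan is to upgrade the linear isomorphism provided by Theorem~\ref{thm:composition factors} (and its immediate corollary) to a Lie algebra isomorphism, and then separately identify the right-hand side with the quotient of a fixed-point subalgebra.

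First, applied with the underlying vector space $V=\mf{g}$ of our Lie algebra, Theorem~\ref{thm:composition factors} (via the preceding corollary) supplies a linear bijection
\[\mf{A}/\mf{I}_{\genrs_0,m}\;\xrightarrow{\;\sim\;}\;(\mf{g}\otimes\mb{C}[z]/(z^m))^{\gamma_0},\]
induced by the jet map $\jet{\genrs_0}{m-1}\colon\OX\to\mb{C}[z]/(z^m)$ that sends a holomorphic function to its truncated Taylor expansion in the linearising coordinate $z$. So at the level of vector spaces there is nothing left to do.

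Next, the crucial point is that $\jet{\genrs_0}{m-1}$ is a homomorphism of commutative associative algebras, not merely a linear map. Consequently $\id_{\lieg}\otimesC\jet{\genrs_0}{m-1}$ sends the Lie bracket on $\lieg\otimesC\OX$ (bracket on $\lieg$, product on $\OX$) to the analogous Lie bracket on $\lieg\otimesC\mb{C}[z]/(z^m)$. Now I would invoke Proposition~\ref{prop:IdealsFromReps} with $\varphi=\jetrep{\genrs_0}{m}=\indrep{0}{m}\circ\jet{\genrs_0}{m-1}$: that proposition identifies $\mf{A}/\mf{I}_{\genrs_0,m}$ with $\im(\id_{\lieg}\otimesC\jetrep{\genrs_0}{m})$ as Lie algebras, and under the identification of $\im(\indrep{0}{m})$ with $\mb{C}[z]/(z^m)$ this image is precisely the $\gamma_0$-invariants inside $\lieg\otimesC\mb{C}[z]/(z^m)$ by the previous paragraph. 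Hence the bijection of the previous paragraph is an isomorphism of Lie algebras.

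Finally, to pass from $(\mf{g}\otimes\mb{C}[z]/(z^m))^{\gamma_0}$ to $(\mf{g}\otimes\mb{C}[z])^{\gamma_0}/(z^m)$, I would use that the $\gamma_0$-action on $\mb{C}[z]$ is by $z\mapsto\zeta^{-1}z$, which preserves the ideal $(z^m)$; thus the short exact sequence
\[0\to \mf{g}\otimes(z^m)\to\mf{g}\otimes\mb{C}[z]\to\mf{g}\otimes\mb{C}[z]/(z^m)\to 0\]
is $\gamma_0$-equivariant. Since $\langle\gamma_0\rangle$ is a finite group and we are working over $\mb{C}$, taking $\gamma_0$-invariants is exact, which yields the second isomorphism and identifies the ideal $(z^m)\cap(\mf{g}\otimes\mb{C}[z])^{\gamma_0}$ with $(\mf{g}\otimes\mb{C}[z])^{\gamma_0}\cdot z^m$.

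I do not expect a genuine obstacle here: the substantive content lies in Theorem~\ref{thm:composition factors}, and the corollary is essentially a matter of observing that the linear identifications constructed there are compatible with the Lie bracket. The only mild point to verify is the multiplicativity of the jet map, which is immediate from the product rule.
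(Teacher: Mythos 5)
Your proposal is correct and follows essentially the same route as the paper: the paper also obtains the linear isomorphism from Theorem~\ref{thm:composition factors} (via the unnumbered vector-space corollary preceding Corollary~\ref{cor:twisted truncated current algebra}) and then invokes Proposition~\ref{prop:IdealsFromReps} to promote it to a Lie algebra isomorphism, with the multiplicativity of the jet map doing the work in the background. The paper states this upgrade in a single sentence without spelling out either the bracket-compatibility check or the exactness-of-invariants argument for the second isomorphism, so your write-up fills in the same gaps one would naturally fill, rather than taking a different path.
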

In particular, if $A_1^{\bar{k}},\ldots,A_{d_{\bar{k}}}^{\bar{k}}$ is a basis of $\mf{g}_{\bar{k}}$ for $\bar{k}\in\zn{\nu_0}$, then a basis for the twisted truncated current algebra is given by 
\[A_j^{\bar{k}}\otimes z^k + (z^m),\quad k=0,\ldots,m-1, \quad j=1,\ldots,d_{\bar{k}}.\]
The power of $z$ defines a $\mb{Z}$-grading \[\mf{A}/\mf{I}_{\genrs_0,m}=\bigoplus_{i=0}^{\infty} \mf{A}_i,\quad [\mf{A}_i,\mf{A}_j]\subset\mf{A}_{i+j},\quad \mf{A}_i=\{0\} \text{ if }i\ge m.\]

The case $m=\nu_0$ is special because the dimension of $\mf{A}/\mf{I}_{\genrs_0,\nu_0}$ equals that of $\mf{g}$. 
We can in fact see that $\mf{A}/\mf{I}_{\genrs_0,\nu_0}$ is a contraction\footnote{For $z\in(0,1]$, the map $U_z:A_j^{\bar{k}}\mapsto A_j^{\bar{k}} z^{k}$, $k=0,\ldots,\nu_0-1$, $j=1,\ldots,d_{\bar{k}}$ is a linear isomorphism of $\mf{g}$ which sends the Lie structure to an equivalent one. The limit $[A,B]_0=\lim_{z\mapsto 0} U_z[U_z^{-1} A,U_z^{-1} B]$ exists for all $A,B\in\mf{g}$ and defines a new Lie structure on $\mf{g}$. The quotient $\mf{A}/\mf{I}_{\genrs_0,\nu_0}$ is isomorphic to $(\mf{g},[\cdot,\cdot]_0)$.} of $\mf{g}$.

For all but finitely many $x_0\in\rs$ the automorphism $\gamma_0$ is trivial. The corresponding quotients $\mf{g}\otimes\mb{C}[z]/(z^m)$ of the automorphic Lie algebras are known as twisted truncated current algebras. Their representation theory has seen great developments in recent years \cite{wilson2007representations,wilson2011highest,fourier2015new,kus2015fusion,bianchi2018bases}.

We end this section with a simple example of an automorphic Lie algebra of genus $1$ illustrating Theorem \ref{thm:composition factors} and its Corollary \ref{cor:twisted truncated current algebra}.
\begin{exam}
\newcommand{\lat}{L}
Let $\lat=\mb{Z}l_1\oplus\mb{Z}l_2$ be a lattice in $\mb{C}$ and $\mb{T}=\mb{C}/\lat$ the associated torus with complex structure. It is well known that the field of meromorphic functions on $\mb{T}$ is $\mb{C}(\wp)\oplus\mb{C}(\wp)\wp'$ where $\wp$ is the Weierstrass p function
\[\wp(z)=\frac{1}{z^2}+\sum_{0\ne l\in\lat}\frac{1}{(z+l)^2}-\frac{1}{l^2}\]
which satisfies the equation $(\wp')^2=4\wp^3-g_2\wp-g_3$ where $g_2=60 \sum_{0\ne l\in\lat}\frac{1}{l^4}$ and $g_3=140 \sum_{0\ne l\in\lat}\frac{1}{l^6}$
(in this example we will identify functions on $\mb{T}$ with $\lat$-invariant functions on $\mb{C}$).
The only poles of $\wp$ are at the lattice $\lat$. Hence if we choose $\rs=\mb{T}\setminus\{\lat\}$ then
\[\mc{O}_{\rs}=\mb{C}[\wp]\oplus\mb{C}[\wp]\wp',\quad S=\{\lat\}\] 
It is easy to see that $\wp(-z)=z$ and $\wp'(-z)=-\wp(z)$ which makes it also easy to construct an automorphic Lie algebra with reduction group $\rg=\cg{2}=\langle\gamma\rangle$ on the torus. Define $\sigma_{\rs}:\rg\mapsto\Aut{\rs}$ and $\sigma_{\mf{sl}_2(\mb{C})}:\rg\mapsto\Aut{\mf{sl}_2(\mb{C})}$ by \[\sigma_{\rs}(\gamma)z=-z,\quad \sigma_{\mf{sl}_2(\mb{C})}(\gamma)\begin{pmatrix}a&b\\c&-a\end{pmatrix}=\begin{pmatrix}a&-b\\-c&-a\end{pmatrix}.\]
Then we can write down a basis for the automorphic Lie algebra. Superscripts $\pm$ indicate the $\pm1$ eigenspaces of $\gamma$.
\begin{align*}
\mf{A}&=(\mf{sl}_2(\mb{C})\otimes \mc{O}_{\rs})^\rg
\\&=\mf{sl}_2(\mb{C})^+\otimes \mc{O}_{\rs}^+\oplus\mf{sl}_2(\mb{C})^-\otimes \mc{O}_{\rs}^-
\\&=\mb{C}\begin{pmatrix}1&0\\0&-1\end{pmatrix}\otimes \mb{C}[\wp]\oplus \mb{C}\left\langle\begin{pmatrix}0&1\\0&0\end{pmatrix},\begin{pmatrix}0&0\\1&0\end{pmatrix}\right\rangle\otimes \mb{C}[\wp]\wp'
\\&=\mb{C}\left\langle\begin{pmatrix}1&0\\0&-1\end{pmatrix}\otimes\wp^j,\,\begin{pmatrix}0&1\\0&0\end{pmatrix}\otimes\wp^j\wp',\,\begin{pmatrix}0&0\\1&0\end{pmatrix}\otimes\wp^j\wp'\,:\,j\ge0\right\rangle
\end{align*}
We pick a point $\genrs_0=l_1/2$ with nontrivial stabiliser. Define $\tilde{\wp}=\wp-\wp(l_1/2)$. The displayed formula above remains true if $\wp$ is replaced by $\tilde{\wp}$. Now $\wp'$ has a zero at $\genrs_0$ of order $1$ (due to antisymmetry) and therefore $\tilde{\wp}$ has a zero at $\genrs_0$ of order $2$. From this point it is easy to see that, for $j\ge0$,
\begin{align*}
&\mf{I}_{\genrs_0,2j}/\mf{I}_{\genrs_0,2j+1}=\mb{C}\left\langle\begin{pmatrix}1&0\\0&-1\end{pmatrix}\otimes\tilde{\wp}^j+\mf{I}_{\genrs_0,2j+1}\right\rangle,
\\&\mf{I}_{\genrs_0,2j+1}/\mf{I}_{\genrs_0,2j+2}=\mb{C}\left\langle\begin{pmatrix}0&1\\0&0\end{pmatrix}\otimes\tilde{\wp}^j\wp'+\mf{I}_{\genrs_0,2j+2},\,\begin{pmatrix}0&0\\1&0\end{pmatrix}\otimes\tilde{\wp}^j\wp'+\mf{I}_{\genrs_0,2j+2}\right\rangle,
\end{align*}
which exemplifies Theorem \ref{thm:composition factors}.
In particular, the quotient $\mf{A}/\mf{I}_{\genrs_0,m}$ of the automorphic Lie algebra has basis
\begin{align*}
&h^j=\begin{pmatrix}1&0\\0&-1\end{pmatrix}\otimes \tilde{\wp}^j+\mf{I}_{\genrs_0,m},\quad 2j<m
\\&e^j=\begin{pmatrix}0&1\\0&0\end{pmatrix}\otimes \tilde{\wp}^j\wp'+\mf{I}_{\genrs_0,m},\quad 2j+1<m
\\&f^j=\begin{pmatrix}0&0\\1&0\end{pmatrix}\otimes \tilde{\wp}^j\wp'+\mf{I}_{\genrs_0,m},\quad 2j+1<m
\end{align*}
and structure constants
\begin{align*}
&[h^i,e^j]=\left\{\begin{array}{ll} 2e^{i+j}&\text{if }i+j+1<m\\0&\text{otherwise}\end{array}\right.
\\&[h^i,f^j]=\left\{\begin{array}{ll} 2f^{i+j}&\text{if }i+j+1<m\\0&\text{otherwise}\end{array}\right.
\\&[e^i,f^j]=\left\{\begin{array}{ll} (4\wp^3-g_2 \wp- g_3)h^{i+j}&\text{if }i+j+2<m\\0&\text{otherwise.}\end{array}\right.
\end{align*}
This exemplifies Corollary \ref{cor:twisted truncated current algebra}, where one can make the isomorphism to the twisted truncated current algebra explicit by sending $\tilde{\wp}$ to $z_0^2$ and $\wp'$ to $z_0$ and carrying the group action to $\gamma z_0=-z_0$. 
\end{exam}

\section{The Local Structure of Automorphic Lie Algebras}
\label{sec:local}

In this section we specialise the Lie algebra $\mf{g}$ to be simple and root systems will enter the discussion. The twisted truncated current algebras can be described in root cohomological terms \cite{knibbeler2020cohomology} using the theory of automorphisms of finite order (a.k.a. torsions) of simple Lie algebras.

Assume that $\mf{g}$ is a simple Lie algebra of type ${X}_N$.
The eigenspace decomposition $\mf{g}=\bigoplus_{\bar{m}\in\zn{\nu_0}}\mf{g}_{\bar{m}}$ with respect a torsion $\gamma_0$ can be described using Kac coordinates, which we briefly explain here. For a more complete treatment we refer to \cite{kac1990infinite} where torsions are described in order to realise the affine Kac-Moody algebras concretely, and to \cite{reeder2010torsion} where the classification of torsions \cite{kac1969automorphisms} is achieved using only finite dimensional methods extending the reasoning of Cartan used to classify inner torsions.

Any torsion is of the form $\gamma_0=\mu g$ with $\mu$ an automorphism of $\mf{g}$ induced by an automorphism of its Dynkin diagram of order $r$, defining a second grading \[\mf{g}=\bigoplus_{\bar{l}\in\zn{r}}\mf{g}^{\bar{l}}\] and $g$ an inner torsion of the form $\exp \ad(x)$ with $x$ in a CSA of $\mf{g}^{\bar{0}}$ \cite[Prop. 8.1]{kac1990infinite}. This is explained by the following construction (see also Example \ref{ex:sl3}). The space $\mf{g}^{\gamma_0}$ contains a regular element $x'$. It can be scaled so that $\ad(x')$ has integer eigenvalues. Choose the centraliser of $x'$ in $\mf{g}$ as Cartan subalgebra of $\mf{g}$. This also fixes roots of $\mf{g}$. Define a root $\alpha$ to be positive when $\alpha(x')$ is positive. Then we have simple roots and corresponding weight vectors $E'_1,\ldots,E'_N\in\mf{g}$.

Since $\ad(x') \gamma_0=\gamma_0 \ad(x')$, the eigenspaces of $\ad(x')$, and in particular the CSA, are $\gamma_0$-invariant. Therefore $\gamma_0$ sends one weight space $\mf{g}_\alpha$ to another $\mf{g}_{\alpha\circ\gamma_0^{-1}}$. In that way it induces an automorphism of the root system preserving the positive roots, hence the simple roots, and hence a diagram automorphism $\bar{\mu}$, and an associated automorphism $\mu$ of $\mf{g}$. Say \[\gamma_0 E'_i = \zeta^{s_i}E'_{\bar{\mu}(i)}.\]
Then $g=\mu^{-1}\gamma_0$ sends $E'_i$ to $\zeta^{s_i}E'_{i}$ and is therefore of the form $g=\exp \ad(x)$ with $x$ in the CSA of $\mf{g}$. But we have more than that. The equation $\gamma_0 g^{-1} E'_i = E'_{\bar{\mu}(i)}$ shows that $s_i\equiv s_{\bar{\mu}(i)}$ modulo $\nu_0$. 
Therefore $g\mu=\mu g$ and $g=\exp \ad(x)$ with $x$ in the CSA of $\mf{g}^{\bar{0}}$.

Since the inner factor $g$ of $\gamma_0$ commutes with the diagram automorphism $\mu$ induced by $\gamma_0$, and with the action of the CSA of $\mf{g}^{\bar{0}}$, there is a basis diagonalising all three. As demonstrated in \cite[\S 8.3]{kac1990infinite}, such a basis is found by averaging the elements $E'_1,\ldots,E'_N$ over the action of $\mu$, and adding one more element to obtain a set of generators $E_0,\ldots,E_\ell$ of the Lie algebra $\mf{g}$ (where $\ell$ is the rank of $\mf{g}^{\bar{0}}$).
If $E_i$ has $\mu$-eigenvalue $(\zeta^{\frac{\nu_0}{r}})^l$ and  $g$-eigenvalue $\zeta^m$, we define $s_i$ to be the residue of $\frac{\nu_0}{r}l+m$ modulo $\nu_0$ (the exponent of the $\gamma_0$-eigenvalue). It turns out that the orbit of the base $(E_0,\ldots,E_\ell)$ under the action of the affine Weyl group contains a set of generators for which the integers $(s_0,\ldots,s_\ell)$ satisfy
\begin{equation}
\label{eq:kac coordinates}
\nu_0=r\sum_{i=0}^\ell a_i s_i.
\end{equation}
Kac showed that automorphisms of simple Lie algebras of order $m$, up to conjugacy, are in one-to-one correspondence with sequences $(s_0,\ldots,s_\ell)$ of relative prime nonnegative integers satisfying (\ref{eq:kac coordinates}), up to automorphisms of the Dynkin diagram of type $X_N^{(r)}$ \cite{kac1969automorphisms}. Nowadays such $(s_0,\ldots,s_\ell)$ are known as Kac coordinates (associated to $\gamma_0$).

\newcommand{\rer}{\mathrm{re}}
\newcommand{\imr}{\mathrm{im}}
We will formulate our main result in terms of Kac coordinates and a quotient of the root system of an affine Kac-Moody algebra. Let $P$ be the weight lattice of $\mf{g}^{\bar{0}}$. The nonzero weights of the $\mf{g}^{\bar{0}}$-representation $\mf{g}^{\bar{l}}$ will be denoted $\roots^{\bar{l}}$.  We define two subsets of the group $P\times\zn{r}$, namely \[\bar{\roots}_{\rer}=\bigcup_{\bar{l}\in\zn{r}}\roots^{\bar{l}}\times\{\bar{l}\},\quad \bar{\roots}_{\imr}=\{0\}\times \zn{r}\]
corresponding to the real and imaginary roots of the affine Kac-Moody algebra (united with zero). We endow their union $\bar{\roots}=\bar{\roots}_{\rer}\cup \bar{\roots}_{\imr}$ with the groupoid structure inherited from $P\times\zn{r}$. Then we have an isomorphism of groupoids
\[\bar{\roots}\cong(\roots(X_N^{(r)})\cup\{0\})/r\mb{Z}\delta\] where $\roots(X_N^{(r)})$ is the root system of the affine Kac-Moody algebra of type $X_N^{(r)}$ and $\delta$ the unique positive root $\delta=\sum_{i=0}^\ell a_i \alpha_i$ with coefficient vector $(a_0,\ldots,a_\ell)$ in the kernel of the Cartan matrix of type $X_N^{(r)}$. The coefficients $a_i$ are written in the Dynkin diagrams in \cite[Chap. 4]{kac1990infinite}. 
Notice that the special case of an inner automorphism corresponds to $r=1$, $\mf{g}^{\bar{0}}=\mf{g}$ and $\bar{\roots}\cong\roots(X_N)\cup\{0\}$, the root system of $\mf{g}$ united with zero.

The root systems $\roots(X_N^{(r)})$ and corresponding weight spaces are well understood.
The weight spaces corresponding to $\bar{\roots}_{\rer}$ are one-dimensional, the weight space of weight $(0,\bar{0})$ is $\ell$-dimensional, and weight spaces corresponding to $\bar{\roots}_{\imr}\setminus\{(0,\bar{0})\}$ are $\frac{N-\ell}{r-1}$-dimensional \cite[Cor. 8.3]{kac1990infinite}.
There is a set of simple weights $\sroots=\{\alpha_0,\ldots,\alpha_\ell\}$ in $\roots(X_N^{(r)})$ with defining property that positive element of $\roots(X_N^{(r)})$ is a linear combination of the $\alpha_i$ with nonnegative integer coefficients. Let $\bar{\sroots}$ denote its image in $\bar{\roots}$. We have $\bar{\sroots}\subset\bar{\roots}_\rer$.

We extend the Kac coordinates $s_i=\omega^1(\alpha)$ additively to a function on $\bar{\roots}$ \[\bar{\omega}^1:\bar{\roots}\rightarrow\zn{\nu_0}.\] We denote its residue by $\omega^1:\alpha\mapsto \res\bar{\omega}(\alpha)\in\{0,1,\ldots,\nu_0-1\}$. Moreover, for $\alpha,\beta,\alpha+\beta\in\bar{\roots}$ we define 
\begin{equation}
\label{eq:2-cocycle from eigenvalues}
\omega^2(\alpha,\beta)=\nu_0^{-1}(\omega^1(\beta)-\omega^1(\alpha+\beta)+\omega^1(\alpha))
\end{equation}
 and notice that $\omega^2(\alpha,\beta)\in\{0,1\}$.

When we combine the discussion of this section in with Corollary \ref{cor:twisted truncated current algebra} we obtain the following local structure theorem.
\begin{thm}[Local structure theorem]
\label{thm:local normal form}Let $\mf{A}$ be an automorphic Lie algebra on a punctured compact Riemann surface, where $\mf{g}$ is a simple Lie algebra of type $X_N$. 
Then there is a basis $\{A_{(\alpha,u)}\,:\,\alpha\in\bar{\roots},\,u=1,\ldots,d_\alpha\}$
of $\mf{g}$ diagonalising $\mu$, $g$, and $\ad\mf{h}^{\bar{0}}$, with structure constants $C_{(\alpha,u),(\beta,v)}^{(\alpha+\beta,w)}$ known from the affine Lie algebra $\mf{g}(X_N^{(r)})$. 
Hence the quotient $\mf{A}/\mf{I}_{\genrs_0,m}$ of the automorphic Lie algebra is isomorphic to the truncated twisted current algebra with basis
\[a_{(\alpha,u)}^i=A_{(\alpha,u)} \otimes z^{i\nu_0+{\omega}^1(\alpha)}+(z^{m})\]
for $\alpha\in\bar{\roots},\,u\in\{1,\ldots,d_\alpha\}$, and $0\le i\nu_0+{\omega}^1(\alpha)<m$.
The Lie structure is given by 
\[{[}a_{(\alpha,u)}^i,a_{(\beta,v)}^j]=\delta\sum_{w=1}^{d_{\alpha+\beta}} C_{(\alpha,u),(\beta,v)}^{(\alpha+\beta,w)}\,a_{(\alpha+\beta,w)}^{i+j+\omega^2(\alpha,\beta)}\]
where $\delta=1$ if $(i+j+\omega^2(\alpha,\beta))\nu_0+\omega^1(\alpha+\beta)<m$ and $\delta=0$ otherwise.
\end{thm}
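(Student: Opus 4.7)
The strategy is to combine Corollary \ref{cor:twisted truncated current algebra}, which identifies $\mf{A}/\mf{I}_{\genrs_0,m}$ with $(\mf{g}\otimes\mb{C}[z])^{\gamma_0}/(z^m)$, with the explicit eigenspace decomposition of $\mf{g}$ under $\gamma_0$ supplied by the Kac coordinate discussion above. Given that isomorphism, the remaining task is bookkeeping: write down a basis of $(\mf{g}\otimes\mb{C}[z])^{\gamma_0}/(z^m)$ in the claimed format and verify the structure constants.

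For the basis of $\mf{g}$, I would use that $\mu$, $g$, and $\ad\mf{h}^{\bar{0}}$ pairwise commute: by construction $g=\exp\ad(x)$ for some $x\in\mf{h}^{\bar{0}}$, and $g\mu=\mu g$ was shown in the paragraph preceding the theorem. They therefore admit a common eigenbasis. Invoking \cite[Ch.~8]{kac1990infinite}, the resulting simultaneous eigenvectors are labelled by $\bar{\roots}\cong(\roots(X_N^{(r)})\cup\{0\})/r\mb{Z}\delta$ with multiplicities $d_\alpha$, and their structure constants coincide with those of the affine Kac-Moody algebra $\mf{g}(X_N^{(r)})$ after projection by $r\mb{Z}\delta$. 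This produces $\{A_{(\alpha,u)}\}$ together with the constants $C^{(\alpha+\beta,w)}_{(\alpha,u),(\beta,v)}$.

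Next I would introduce the polynomial factor. By definition of the Kac coordinates, $\gamma_0$ acts on $A_{(\alpha,u)}$ by $\zeta^{\omega^1(\alpha)}$, while (\ref{eq:z}) gives $\gamma_0\cdot z^n=\zeta^{-n}z^n$. Hence $A_{(\alpha,u)}\otimes z^n$ is $\gamma_0$-invariant iff $n\equiv\omega^1(\alpha)\pmod{\nu_0}$, so parametrising $n=i\nu_0+\omega^1(\alpha)$ with $0\le n<m$ yields the basis $\{a_{(\alpha,u)}^i\}$ of the quotient, matching the statement. The bracket then follows from bilinearity of the tensor product and the multiplication of powers of $z$:
\begin{equation*}
[a_{(\alpha,u)}^i,a_{(\beta,v)}^j]=\sum_{w=1}^{d_{\alpha+\beta}}C^{(\alpha+\beta,w)}_{(\alpha,u),(\beta,v)}\,A_{(\alpha+\beta,w)}\otimes z^{(i+j)\nu_0+\omega^1(\alpha)+\omega^1(\beta)}+(z^m).
\end{equation*}
The cocycle identity (\ref{eq:2-cocycle from eigenvalues}) rewrites the exponent as $(i+j+\omega^2(\alpha,\beta))\nu_0+\omega^1(\alpha+\beta)$, producing the advertised formula; the scalar $\delta$ simply records whether this exponent is $<m$ or has been killed by $(z^m)$.

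The principal obstacle is the first step: one must establish that the simultaneous eigenbasis of $\mu$, $g$, $\ad\mf{h}^{\bar{0}}$ reproduces the affine weight data modulo $r\mb{Z}\delta$ with the listed multiplicities. This is not self-contained in the excerpt and relies on Kac's classification of finite-order automorphisms of simple Lie algebras and the realisation of affine Kac-Moody algebras from torsions. Once this identification has been imported, the remainder of the argument is a routine translation through Corollary \ref{cor:twisted truncated current algebra} and the cocycle formula for $\omega^2$.
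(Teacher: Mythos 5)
Your proposal is correct and reconstructs exactly the argument the paper intends but leaves implicit: combine Corollary~\ref{cor:twisted truncated current algebra} with the Kac coordinate description of the $\gamma_0$-eigenspace decomposition of $\mf{g}$ imported from \cite{kac1990infinite,kac1969automorphisms}, then do the invariance bookkeeping (matching $\zeta^{\omega^1(\alpha)}$ against $\zeta^{-n}$ from (\ref{eq:z})) and rewrite the exponent via the cocycle identity (\ref{eq:2-cocycle from eigenvalues}). Your caveat that the identification of the simultaneous eigenbasis with the affine weight data rests on Kac's classification is accurate and is likewise treated as imported background in the paper.
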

If $\rg$ acts on $\mf{g}$ by inner automorphisms then the local Lie structure is easier to write down. The basis of  $\mf{A}/\mf{I}_{\genrs_0,m}$ becomes $h_1^i,\ldots,h_N^i$ with $0\le i\nu_0<m$, and $x_\alpha^i$ with $\alpha$ ranging in the root system $\roots$ of $\mf{g}$ and $0\le i\nu_0+{\omega}^1(\alpha)<m$. The Lie brackets are ${[}h_{i'}^i,h_{j'}^j]=0$, ${[}h_{i'}^i,a_{\alpha}^j]=\alpha(H_{i'})\,a_{\alpha}^{i+j}$ and
\[{[}a_{\alpha}^i,a_{\beta}^j]=\delta\, \epsilon({\alpha,\beta})\,a_{\alpha+\beta}^{i+j+\omega^2(\alpha,\beta)}\]
where $\delta=1$ if $(i+j+\omega^2(\alpha,\beta))\nu_0+\omega^1(\alpha+\beta)<m$ and $\delta=0$ otherwise.
Here $\epsilon({\alpha,\beta})$ is the usual structure constant of a Chevalley basis of $\mf{g}$.

We end this section with an elaborate example of a dihedral group action on a simple Lie algebra involving inner automorphisms, diagram automorphisms, and a product of such. We discuss the associated $1$-chain $\omega^1$ and $2$-cocycle $\omega^2$ that define the Lie algebra structure of automorphic Lie algebras with this group action in the way described by Theorem \ref{thm:local normal form}.
\begin{exam}[$\rg=\dg{6}$, $\mf{g}=\mf{sl}_3$]
\label{ex:sl3}
Consider the Lie algebra $\mf{g}=\mf{sl}_3$ and its standard concretisation with Chevalley basis
\begin{align*}
&E'_0=\left(\begin{array}{rrr}
0 & 0 & 0 \\
0 & 0 & 0 \\
1 & 0 & 0
\end{array}\right)
E'_1=\left(\begin{array}{rrr}
0 & 1 & 0 \\
0 & 0 & 0 \\
0 & 0 & 0
\end{array}\right)
E'_2=\left(\begin{array}{rrr}
0 & 0 & 0 \\
0 & 0 & 1 \\
0 & 0 & 0
\end{array}\right)
\\&
F'_0=[E'_1,E'_2],\quad F'_1=[E'_2,E'_0],\quad F'_2=[E'_0,E'_1],
\\&H'_1=[E'_1,F'_1],\quad H'_2=[E'_2,F'_2]
.
\end{align*}
The group of automorphisms of the root system of $\mf{sl}_3$ is a dihedral group $\Gamma=\langle a,b,c\,|\,a^6=b^2=c^2=a b c=1\rangle$ with $12$ elements. It is generated by a reflection in an arbitrary root and the nontrivial automorphism corresponding to the symmetry of the Dynkin diagram (interchanging the two simple roots). We pick the reflection in the simple root $\alpha_1$ and denote the associated automorphism of $\mf{sl}_3$ by $b$:
\begin{align*}
&bE'_0=F'_2,\quad bE'_1=F'_1,\quad bE'_2=F'_0
.
\end{align*}
The automorphism of $\mf{sl}_3$ corresponding to the symmetry of the Dynkin diagram will be denoted by $c$: 
\begin{align*}
&cE'_0=-E'_0,\quad cE'_1=E'_2,\quad cE'_2=E'_1
.
\end{align*}
We set $a=cb$. Then $a^6=b^2=c^2=a b c=1$. 

If $\Gamma$ acts faithfully on $\overline{\mb{C}}$, then each nontrivial stabiliser subgroup is conjugate to either $\langle a \rangle$, $\langle b \rangle$ or $\langle c \rangle$.
Theorem \ref{thm:local normal form} tells us that, near a point $\genrs_0$ with such nontrivial stabiliser, the automorphic Lie algebra associated to $\rg$ is locally described by the eigenspace decompositions $\mf{g}=\bigoplus_{m\in\zn{\nu_0}}\mf{g}_{\bar{m}}$ with respect to $\langle a \rangle$, $\langle b \rangle$ or $\langle c \rangle$.
We will discuss the inner generator $b$ first, the Dynkin automorphism $c$ second, and the mixed automorphism $a$ at the end.

An example of a regular element of $\mf{g}^b=\mf{g}_{\bar{0}}$ is \[H_0=\left(\begin{array}{rrr}
\frac{1}{2} & \frac{1}{2} & 0 \\
\frac{1}{2} & \frac{1}{2} & 0 \\
0 & 0 & -1
\end{array}\right),\] which has eigenvalues $0$ and $\pm1$. The centraliser of $h_0$ in $\mf{g}$ equals $\mf{h}=\mb{C}C\oplus\mb{C}H_0$ with \[C=\left(\begin{array}{rrr}
\frac{1}{3} & -1 & 0 \\
-1 & \frac{1}{3} & 0 \\
0 & 0 & -\frac{2}{3}
\end{array}\right).\]
This Cartan subalgebra defines six root spaces. For the roots that are simple in the affine system we take
\[E_0=\left(\begin{array}{rrr}
0 & 0 & 1 \\
0 & 0 & 1 \\
0 & 0 & 0
\end{array}\right),\quad 
E_1=\left(\begin{array}{rrr}
\frac{1}{2} & \frac{1}{2} & 0 \\
-\frac{1}{2} & -\frac{1}{2} & 0 \\
0 & 0 & 0
\end{array}\right),\quad
E_2=\left(\begin{array}{rrr}
0 & 0 & 0 \\
0 & 0 & 0 \\
-\frac{1}{2} & \frac{1}{2} & 0
\end{array}\right).
\]
These generate the remaining basis elements $F_{i\Mod{3}}=[E_{i+1\Mod{3}},E_{i+2\Mod{3}}]$ and we set $H_i=[E_i,F_i]$, $i=0,1,2$. The scaling factors of $E_i$ are chosen such that the structure constants of the basis $\{E_i,H_i,F_i\}$ correspond to those of the standard basis $\{E'_i,H'_i,F'_i\}$. 
We have \[\mf{g}_{\bar{0}}=\mb{C}E_0\oplus \mf{h}\oplus \oplus\mb{C}F_0,\quad\mf{g}_{\bar{1}}=\mb{C}E_1\oplus \mb{C}E_2\oplus \mb{C}F_1\oplus \mb{C}F_2.\] This shows that $b$ has Kac coordinates $(s_0,s_1,s_2)=(0,1,1)$ (only determined up to symmetry of the affine Dynkin diagram). 
These Kac coordinates define the one-form $\omega^1$ on $\bar{\roots}\cong\roots(A_2)\cup\{0\}$
\[\omega^1(0)=0,\quad \omega^1(\pm\alpha_0)=0,\quad \omega^1(\pm\alpha_1)=1,\quad \omega^1(\pm\alpha_2)=1.\] Its boundary $\omega^2$ can be presented as a graph with vertices $\bar{\roots}$ and undirected edges $\{\alpha,\beta\}$ for $\omega^2(\alpha,\beta)\ne 0$
\newcommand{\sq}{0.866}
\tikzstyle{root}=[fill=black, shape=circle,minimum size=3 pt,inner sep=0]
\tikzstyle{cochain}=[line width=0.5pt]
\[\begin{tikzpicture}[scale=0.3]
  \path node at ( 0,0) [label=270: $ $] (zero) {$ $}	
	node at ( 4,0) [root,label=0: $\alpha_1 $] (one) {$ $}
  	node at ( 2,4*\sq) [root,label=60: $ $] (two) {$ $}
  	node at ( -2,4*\sq) [root,label=120: $\alpha_2 $] (three) {$ $}
	node at ( -4,0) [root,label=180: $ $] (four) {$ $}
	node at ( -2,-4*\sq) [root,label=240: $\alpha_0 $] (five) {$ $}
	node at ( 2,-4*\sq) [root,label=300: $ $] (six) {$ $};
	\draw [cochain](one) to node [sloped,above,near end]{} (four);
	\draw [cochain](three) to node [sloped,below,near end]{} (six);
	\draw [cochain](one) to node [sloped,above]{} (three);
	\draw [cochain](four) to node [sloped,below]{} (six);
\end{tikzpicture}\]
which gives a concise description of the bracket relations in $\mf{A}/\mf{I}_{\genrs_0,m}$ given in Theorem \ref{thm:local normal form}, an edge indicating one is added to the superindex, and no edge indicating nothing is added to the superindex.

Next we consider the automorphism $c$. Since it is induced by the symmetry of the Dynkin diagram, this case is also described in detail in \cite[\S 8.3]{kac1990infinite}. In the general construction we have $\mu=c$, $g=1$, so we can immediately proceed to the simultaneous diagonalisation of a CSA of $\mf{g}_0$ and $c$. The root system $\roots(A_2^{(2)})$ has a base $\bar{\sroots}=\{\alpha_0,\alpha_1\}$ 
\begin{align*}
&\bar{\roots}_\rer=\{\alpha_0,\alpha_1,\alpha_0+\alpha_1.3\alpha_0+\alpha_1, 4\alpha_0+\alpha_1, 3\alpha_0+2\alpha_1\},
\\& \bar{\roots}_\imr=\{0,\delta=2\alpha_0+\alpha_1\}
\end{align*}
We choose $H_0=2(H'_1+H'_2)$ to span the CSA, thereby fixing the simple root spaces to be the respective spans of $E_0=E'_1+E'_2$ and $E_1=E'_0$. The other basis elements are brackets of these two.
From $cE_0=E_0$ and $cE_1=-E_1$ we see that $c$ has Kac coordinates $(s_0,s_1)=(0,1)$ 
. 
Since $c=\mu$, their gradings coincide and we simply get $\bar{\omega}^1((\alpha,i))=i$, and from that follows $\omega^2((\alpha,i),(\beta,j))=1$ if $i=j=1$ and $\omega^2((\alpha,i),(\beta,j))=0$ otherwise.

We can present $\bar{\roots}$ graphically by making $\alpha$ the horizontal coordinate of $(\alpha,i)$ and $i$ the vertical\footnote{This picture needs a warning: it is only a representation of the groupoid structure of the root system. Not of the usual bilinear form defined on the dual of the Cartan subalgebra of the Kac-Moody algebra. This bilinear form is not positive definite ($(\delta,\delta)=0$).} (in this example $\alpha$ lives in the root lattice $\mf{sl}_2$ and $i$ in $\zn{2}$). We will write the value of $\omega^1$ above the weight. 
\[\begin{tabular}{c}\begin{tikzpicture}[scale=3*\dynkintablescale,baseline=(current bounding box.center), font=\dynkinfont,decoration={
    markings,
        mark=between positions 0.6 and 5 step 8mm with {\arrow{<}} }]
\path 
node at ( 0,0) [dynkinnode,label=90: $0 $,label=270: $0 $] (zero) {$ $}
node at ( -1,0) [dynkinnode,label=90: $0 $,label=270: $3\alpha_0+2\alpha_1 $] (one) {$ $}
node at ( 1,0) [dynkinnode,label=90: $0 $,label=270: $\alpha_0 $] (one) {$ $}
node at ( -2,1) [dynkinnode,label=90: $1 $,label=270: $\alpha_1 $] (one) {$ $}
node at ( -1,1) [dynkinnode,label=90: $1 $,label=270: $\alpha_0+\alpha_1 $] (one) {$ $}
node at (0,1) [dynkinnode,label=90: $1 $,label=270: $\delta $] (one) {$ $}
node at ( 1,1) [dynkinnode,label=90: $1 $,label=270: $3\alpha_0+\alpha_1 $] (one) {$ $}
node at ( 2,1) [dynkinnode,label=90: $1 $,label=270: $4\alpha_0+\alpha_1 $] (one) {$ $}
;
\end{tikzpicture}\end{tabular}\]

Lastly we turn to $a$. The algebra $\mf{g}^a$ is one-dimensional, spanned by the regular element 
\[H_0=
\frac{i\sqrt{3}}{3}\left(\begin{array}{rrr}
0 & 1 & 1 \\
-1 & 0 & -1 \\
-1 & 1 & 0
\end{array}\right)
.
\]
Its centraliser is a CSA defining simple weight spaces spanned by 
\begin{align*}
&E''_0=\left(\begin{array}{rrr}
-\frac{1}{216} i \, \sqrt{3} - \frac{1}{216} & \frac{1}{216} i \, \sqrt{3} - \frac{1}{216} & -\frac{1}{108} \\
\frac{1}{216} i \, \sqrt{3} - \frac{1}{216} & \frac{1}{108} & \frac{1}{216} i \, \sqrt{3} + \frac{1}{216} \\
-\frac{1}{108} & \frac{1}{216} i \, \sqrt{3} + \frac{1}{216} & \frac{1}{216} i \, \sqrt{3} - \frac{1}{216}
\end{array}\right),
\\&
E''_1=\left(\begin{array}{rrr}
\sqrt{3} - i & \sqrt{3} - i & -\sqrt{3} + i \\
2 i & 2 i & -2 i \\
\sqrt{3} + i & \sqrt{3} + i & -\sqrt{3} - i
\end{array}\right),
\\&
E''_2=\left(\begin{array}{rrr}
-\sqrt{3} + i & -2 i & -\sqrt{3} - i \\
-\sqrt{3} + i & -2 i & -\sqrt{3} - i \\
\sqrt{3} - i & 2 i & \sqrt{3} + i
\end{array}\right).
\end{align*}
We find that \[a E''_0=\zeta E''_0,\quad a E''_1=\zeta^4 E''_2,\quad a E''_2=\zeta^4 E''_1.\]
From there we readily see the factorisation in a diagram automorphism and an inner automorphism: 
\begin{align*}
&\mu E''_0=- E''_0,\quad \mu E''_1= E''_2,\quad \mu E''_2= E''_1.
\\& g E''_0=\zeta^4 E''_0,\quad g E''_1=\zeta^4 E''_1,\quad g E''_2= \zeta^4E''_2.
\end{align*}
We diagonalise $\mu$ and obtain the generators $E'''_0=E''_1+E''_2$ and $E'''_1=E''_0$ for the simple root spaces. Then $\gamma_0  E'''_0=\zeta^4 E'''_0$ and $\gamma_0 E'''_1=\zeta E'''_1$ and $\omega^1$ generated by $(s'_0,s'_1)=(4,1)$ is
\[\begin{tabular}{c}\begin{tikzpicture}[scale=3*\dynkintablescale,baseline=(current bounding box.center), font=\dynkinfont,decoration={
    markings,
        mark=between positions 0.6 and 5 step 8mm with {\arrow{<}} }]
\path 
node at ( 0,0) [dynkinnode,label=90: $0 $,label=270: $0 $] (zero) {$ $}
node at ( -1,0) [dynkinnode,label=90: $2 $,label=270: $3\alpha_0+2\alpha_1 $] (one) {$ $}
node at ( 1,0) [dynkinnode,label=90: $4 $,label=270: $\alpha_0 $] (one) {$ $}
node at ( -2,1) [dynkinnode,label=90: $1 $,label=270: $\alpha_1 $] (one) {$ $}
node at ( -1,1) [dynkinnode,label=90: $5 $,label=270: $\alpha_0+\alpha_1 $] (one) {$ $}
node at (0,1) [dynkinnode,label=90: $3 $,label=270: $\delta $] (one) {$ $}
node at ( 1,1) [dynkinnode,label=90: $1 $,label=270: $3\alpha_0+\alpha_1 $] (one) {$ $}
node at ( 2,1) [dynkinnode,label=90: $5 $,label=270: $4\alpha_0+\alpha_1 $] (one) {$ $}
;
\end{tikzpicture}\end{tabular}\]
This is sufficient to construct the structure constants of $\mf{A}/\mf{I}_{\genrs_0,m}$ using Theorem \ref{thm:local normal form}. However, $(s'_0,s'_1)=(4,1)$ are no Kac coordinates since they do not satisfy (\ref{eq:kac coordinates}). We can apply an element of the Weyl group $W(A^{(2)}_2)=\langle \sigma_0,\sigma_1\rangle$ in order to obtain Kac coordinates. In this case we find that $\sigma_1 \sigma_0$ takes $\omega^1$ to 
\[\begin{tabular}{c}\begin{tikzpicture}[scale=3*\dynkintablescale,baseline=(current bounding box.center), font=\dynkinfont,decoration={
    markings,
        mark=between positions 0.6 and 5 step 8mm with {\arrow{<}} }]
\path 
node at ( 0,0) [dynkinnode,label=90: $0 $,label=270: $0 $] (zero) {$ $}
node at ( -1,0) [dynkinnode,label=90: $5 $,label=270: $3\alpha_0+2\alpha_1 $] (one) {$ $}
node at ( 1,0) [dynkinnode,label=90: $1 $,label=270: $\alpha_0 $] (one) {$ $}
node at ( -2,1) [dynkinnode,label=90: $1 $,label=270: $\alpha_1 $] (one) {$ $}
node at ( -1,1) [dynkinnode,label=90: $2 $,label=270: $\alpha_0+\alpha_1 $] (one) {$ $}
node at (0,1) [dynkinnode,label=90: $3 $,label=270: $\delta $] (one) {$ $}
node at ( 1,1) [dynkinnode,label=90: $4 $,label=270: $3\alpha_0+\alpha_1 $] (one) {$ $}
node at ( 2,1) [dynkinnode,label=90: $5 $,label=270: $4\alpha_0+\alpha_1 $] (one) {$ $}
;
\end{tikzpicture}\end{tabular}\]
with Kac coordinates $(s_0,s_1)=(1,1)$. 
The associated Lie algebra generators are 
\begin{align*}
&E_0=\left(\begin{array}{rrr}
-6 \, \sqrt{3} + 6 i & -3 \, \sqrt{3} - 3 i & -6 i \\
-3 \, \sqrt{3} - 3 i & -12 i & -3 \, \sqrt{3} + 3 i \\
-6 i & -3 \, \sqrt{3} + 3 i & 6 \, \sqrt{3} + 6 i
\end{array}\right)
,
\\&E_1=\left(\begin{array}{rrr}
-\frac{1}{648} \, \sqrt{3} {\left(\sqrt{3} + 3 i\right)} & -\frac{1}{648} \, \sqrt{3} {\left(\sqrt{3} - 3 i\right)} & -\frac{1}{108} \\
-\frac{1}{648} \, \sqrt{3} {\left(\sqrt{3} - 3 i\right)} & \frac{1}{108} & \frac{1}{648} \, \sqrt{3} {\left(\sqrt{3} + 3 i\right)} \\
-\frac{1}{108} & \frac{1}{648} \, \sqrt{3} {\left(\sqrt{3} + 3 i\right)} & -\frac{1}{648} \, \sqrt{3} {\left(\sqrt{3} - 3 i\right)}
\end{array}\right).
\end{align*} 

All dihedral groups can be realized as automorphisms of the Riemann sphere and also as automorphisms of any complex torus. Consequently there are automorphic Lie algebras on punctured spheres and tori whose local Lie structure is described by the $2$-cocycles $\omega^2$ obtained in this example in the sense of Theorem \ref{thm:local normal form}.
\end{exam}

	\section{Further Directions} \label{sec:directions}
As we have seen, the representation theory of automorphic Lie algebras is incredibly rich and complicated. Due to the wild representation type of automorphic Lie algebras, one cannot hope to gain a complete understanding of their representation theory. However as a result of the description of the local structure theory of automorphic Lie algebras, there is promise to gain a much greater (albeit incomplete) understanding of their representation theory. There are numerous directions which one can pursue in this regard.

The finite-dimensional irreducible representations of automorphic Lie algebras have been classified in the general context of equivariant map algebras. However since the category $\fin \mfA$ of an automorphic Lie algebra $\mfA$ is not semisimple, this is very far from being the complete picture. Normally, one would look to the next largest class of representations that could be used to classify objects in $\fin \mfA$ --- namely, the indecomposable representations. But as we have seen, a classification of indecomposable representations of $\fin \mfA$ is largely hopeless. Thus, it makes sense to restrict ones attention to special subclasses of indecomposable representations.

\subsection{Bricks}
An example of a special class of indecomposable representations one may want to study is the class of bricks. These are representations with a trivial endomorphism algebra. Bricks are incredibly important examples of representations. As well as being fundamental objects in the category of representations they have many applications. Irreducible representations are an obvious example of bricks, but it is possible for there to exist many other bricks in an algebra that are not irreducible, and thus these form a wider class of representations to consider. Indeed, automorphic Lie algebras contain bricks that are not irreducible.

\begin{exam} \label{ex:Brick}
	Let $\mfA=\mathfrak{A}(\sltwo,\mathbb{X}, \Gamma, \sigma_{\sltwo},\sigma_\mbX)$ as in Example~\ref{ex:sl2Z5}. Consider the wild quotient Lie algebra
	\begin{equation*}
		\mfA'=\mfA / \mathfrak{I}_{0,3} \cong \left\langle 
		\begin{pmatrix}
			1	&	0 \\
			0	&	-1
		\end{pmatrix},
		\begin{pmatrix}
			0		&	0 \\
			\polyvar^2	&	0
		\end{pmatrix}
		\right\rangle.
	\end{equation*}
	Consider the representation $\rho = \ad_{\mfA'} \circ \ev_{\indrep{0}{3}} \in \fin \mfA$. Explicitly with respect to the basis given in Example~\ref{ex:sl2Z5}, we have
	\begin{equation*}
		\rho\left(
		\begin{pmatrix}
			1	&	0 \\
			0	&	-1
		\end{pmatrix}
		\right) =
		\begin{pmatrix}
			0	&	0 \\
			0	&	-2
		\end{pmatrix},\quad
		\rho\left(
		\begin{pmatrix}
			0		&	0 \\
			\polyvar^2	&	0
		\end{pmatrix}
		\right) =
		\begin{pmatrix}
			0	&	0 \\
			2	&	0
		\end{pmatrix}
	\end{equation*}
	and $\rho(a)=0$ for any other basis element $a$ of $\mfA$. It is easy to see that $\rho$ is not isomorphic to a point-evaluation representation (nor is it isomorphic to a point-evaluation representation tensored with a 1-dimensional representation of $\mfA$), and thus $\rho$ is not irreducible. However, a short calculation shows that $\End_{\mathbb{C}}(\rho) \cong \mathbb{C}$. Thus, $\rho$ is a brick.
\end{exam}

There are many examples of wild structures where the bricks are well-understood (see for example, \cite{Bodnarchuk2010} and references within), and thus it would be interesting to investigate what can be said about the bricks in automorphic Lie algebras.

\subsection{Uniserial Representations}
Uniserials are another important class of indecomposable representations. In some cases, it is possible to classify uniserial representations in a wild algebra -- an example being the particular solvable Lie algebras considered in \cite{Casati2017}. One can obtain many examples of uniserial representations of automorphic Lie algebras by evaluating with Jordan blocks.

A potential step in investigating the uniserial representations of automorphic Lie algebras is in understanding the space of extensions between representations. Formulas for the space of extensions between finite-dimensional irreducible representations of equivariant map algebras have been calculated in \cite{neher2015ext}. Of particular interest is when the space of extensions between irreducible representations is 1-dimensional, as this information can be used to construct uniserial representations and uniserial length subcategories of $\fin \mfA$ (see for example \cite{Eriksen2018}).

\printbibliography
\end{document}